\documentclass[12pt]{amsart}
\usepackage[a4paper]{geometry}
\usepackage{color}

 \newtheorem{thm}{Theorem}[section]
 \newtheorem{cor}[thm]{Corollary}
 \newtheorem{lem}[thm]{Lemma}
 
 \theoremstyle{definition}
 \newtheorem{defn}{Definition}[section]
 \theoremstyle{remark}
 
 \numberwithin{equation}{section}
\newcommand{\grad}{\mathop{\mathrm{grad}}}
\newcommand{\trace}{\mathop{\mathrm{trace}}}
\newcommand{\dive}{\mathop{\mathrm{div}}}

\begin{document}
\title{\textbf{Biconservative hypersurfaces in space forms with  harmonic curvature}}
\author{Ram Shankar Gupta, Andreas Arvanitoyeorgos and Marina Statha}
 \maketitle
\begin{abstract}
A biconservative hypersurface $M$
in a space form $\overline M^{n+1}(c)$ is a hypersurface
whose normal part of the vector $\Delta\vec{H}$ vanishes.
We study such hypersurfaces with harmonic curvature or parallel Ricci curvature, and   prove
that have constant mean
curvature.   
Also, by combining our results with previous results of H.B. Lawson Jr. and M. Umehara we obtain that
if $M$ is a biconservative hypersurface in
  a space form $\overline M^{n+1}(c)$ with parallel Ricci tensor or harmonic curvature, then the shape operator $\bold{A}$ of $M$  is covariant constant, i.e.
  $\nabla\bold{A}=0$.
  \\
\\
\textbf{2020 Mathematics Subject Classification:}  53C40; 53C42\\
\textbf{Key Words:} Space form, Mean curvature vector,
Biconservative hypersurface, Harmonic curvature, Ricci curvature.
\end{abstract}

\maketitle
\section{\textbf{Introduction}}

A Riemannian manifold $(M,g)$ is said to have {\it harmonic curvature}
if its Levi-Civita connection $\nabla$ in the tangent bundle $TM$ satisfies
$\delta R = 0$, where $R$ is the curvature tensor and  $\delta$ is the codifferential of the formal adjoint of the  exterior differentiation operator $d$. The Ricci tensor $S $ can be regarded as a $1$-form with values in the cotangent bundle.  Then from the contraction of the second Bianchi identity, the notion of harmonic curvature is equivalent to $ dS = 0$ (\cite{h32}).  The curvatures
of such manifolds occur as a special case of \emph{Yang-Mills fields} when $M$ is compact.  This means that
$\nabla$ is a Yang-Mills potential in $TM$, that is, a critical point for the \emph{Yang-Mills functional}
\begin{eqnarray}\textsf{Y} M(\nabla) = \frac{1}{2}\int_M |R|^2,\end{eqnarray}
in the space of all $g$-metric connections.
 These
manifolds also form a natural generalization of Einstein manifolds and
of conformally flat manifolds with constant scalar curvature (\cite{h32}).

In a preliminary version of \cite{h6} J. P. Bourguignon conjectured that on a $4$-dimensional compact Riemannian manifold with harmonic curvature the Ricci tensor must be {\it parallel}, that is $\nabla S=0$; equivalently $(\nabla _X S)(Y, Z)=0$, for all $X, Y, Z\in TM$.  The question was negatively answered by Derdzi$\acute{\textrm{n}}$ski \cite{h2} who gave an example of a $4$-dimentional Riemannian manifold with harmonic curvature whose Ricci tensor is not parallel. In fact, the harmonic curvature condition on a manifold is essentially weaker than that of  parallel Ricci tensor (\cite{h3}, \cite{h5}).  On the other hand, hypersurfaces with parallel Ricci tensor in a Riemannian manifold of constant curvature were studied by H. B. Lawson Jr. \cite{h7} and I. Mogi, H. Nakagawa \cite{h8}.

E. $\hat{\textrm{O}}$machi in \cite{h4} investigated compact hypersurfaces with harmonic curvature in a Euclidean space or a sphere and gave a classification of such hypersurfaces provided that the mean curvature is constant.
M. Umehara in \cite{h1} studied hypersurfaces with harmonic curvature immersed in a Riemannian manifold of constant curvature, in which $\hat{\textrm{O}}$machi's result   was generalized without the assumption of compactness.
Also, U-H. Ki, H. Nakagawa, M. Umehara in \cite{h9} classified complete hypersurfaces with harmonic curvature in a Riemannian manifold of constant curvature, in the case where the multiplicity of each principal curvature is greater than one.  They proved that there exist infinitely many hypersurfaces with harmonic curvature and non parallel Ricci tensor.
Finally, certain kinds of Riemannian manifolds with harmonic curvature have been investigated by J. P. Bourguignon \cite{h6}, A. Derdzinski \cite{h3}, T. Kashiwada \cite{h10}, S. Tachibana \cite{h11} and others.

At this point we would like to recall the work \cite{h5} of A. Gray, in which he introduced certain generalizations of  the Einstein condition in a Riemannian manifold $(M,g)$.
He called a Riemannian metric $g$ of $\mathcal{A}$-type if $(\nabla _X S)(X, X)=0$ for all $X\in TM$, and of  $\mathcal{B}$-type if $(\nabla _X S)(Y, Z)=(\nabla _Y S)(X, Z)$ for all $X, Y, Z\in TM$, that is Ricci tensor is a {\it Codazzi tensor}.
Manifolds that satisfy either one of these conditions are called {\it Einstein-like} manifolds.
 One of the main results in \cite{h5} is that a Riemannian manifold $(M,g)$ has parallel Ricci tensor if and only if $g$ is  of $\mathcal{A}$-type {\it and} of $\mathcal{B}$-type.

On the other hand there has been extensive research interest in
 the study of biharmonic submanifolds and biconservative submanifolds.
They are generalizations of harmonic maps proposed in 1964 by Eells and
Sampson \cite{7}, which are critical points of the bienergy
functional, obtained by integrating the squared norm of the tension
field, and are characterized by the vanishing of the bitension
field. Another interesting research direction, derived from here, is
the study of {\it biconservative} submanifolds, i.e., those submanifolds
for which only the tangent part of the bitension field vanishes. 
The terminology was introduced by R. Caddeo et. al. in \cite{2}.
For updated works on
biharmonic and biconservative submanifolds, we refer to 
 \cite{ch}, \cite{5},  \cite{9}, \cite{10}, \cite{11}, \cite{14}, \cite{16}, \cite{19},
 \cite{mo}, \cite{8}, \cite{oc},   
   \cite{22} and references therein.

\smallskip
 In view of the above developments, in the present paper we study biconservative
hypersurfaces in a space form $\overline{M}^{n+1}(c)$  with harmonic curvature. 

 In the context of the paper's results, a prerequisite is having a smooth suitable local tangent orthonormal frame denoted by $\{e_1, \dots, e_n \}$  with corresponding smooth eigenvalues of shape operator. It is important to note that the smoothness of the eigenvalue functions, is limited to an open dense subset where the multiplicities of eigenvalues are locally constant. However, extending this conclusion to claim that these multiplicities remain consistent across all connected components within this open dense set, likely requires referencing the findings of DeTurck and Goldschmidt in \cite{DH}. Specifically, they established that in appropriately chosen local coordinates, any metric with $\dive R=0$, i.e. divergence equal to zero for the curvature tensor $R$, is real analytic.

The main results of the paper are  the following:

\begin{thm}\label{main1}
Let $M$ be a biconservative hypersurface with $(\nabla_{X}S)(X, X) = 0$ for all $X\in TM$ in
  a space form $\overline M^{n+1}(c)$. Then $M$  has constant mean curvature.
 \end{thm}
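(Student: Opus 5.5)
We argue by contradiction: we assume $H$ is not constant and work on the open set $U=\{p\in M:\grad H(p)\neq 0\}$, which is nonempty. By the remark in the introduction, on an open dense subset of $U$ we may take a smooth local orthonormal frame $\{e_1,\dots,e_n\}$ of principal directions, $\bold{A}e_i=\lambda_i e_i$, with smooth $\lambda_i$. The biconservative condition for a hypersurface in $\overline M^{n+1}(c)$ reads
\[
\bold{A}(\grad H)=-\frac{nH}{2}\grad H .
\]
So we may take $e_1=\grad H/|\grad H|$, $\lambda_1=-\frac{nH}{2}$, and $e_i(H)=0$ for $i\ge 2$. Since $nH=\sum_i\lambda_i$, this gives $\sum_{j\ge 2}\lambda_j=\frac{3nH}{2}$.

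Next we record the structure equations. Write $\nabla_{e_i}e_j=\sum_k\omega_{ij}^k e_k$. The Codazzi equation gives $e_i(\lambda_j)=(\lambda_i-\lambda_j)\omega_{ji}^j$ for $i\neq j$, and $(\lambda_j-\lambda_k)\omega_{ij}^k=(\lambda_i-\lambda_k)\omega_{ji}^k$ for distinct $i,j,k$. Applying these with $i=1$ (using $e_j(\lambda_1)=0$ for $j\ge2$) yields $\omega_{j1}^1=0$ and hence $\nabla_{e_1}e_1=0$, and also $e_1(\lambda_j)=(\lambda_1-\lambda_j)\omega_{j1}^j$. By the Gauss equation the Ricci tensor is
\[
S(X,Y)=(n-1)c\,g(X,Y)+nH\,g(\bold{A}X,Y)-g(\bold{A}^2X,Y).
\]
Hence $S e_i=\mu_i e_i$ with $\mu_i=(n-1)c+nH\lambda_i-\lambda_i^2$. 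A short computation gives $(\nabla_{e_i}S)(e_j,e_j)=e_i(\mu_j)$ for every $i,j$, because the connection terms drop out on the diagonal. So the Killing-type hypothesis $(\nabla_XS)(X,X)=0$, evaluated at $X=e_i$, gives $e_i(\mu_i)=0$. Polarizing in $X=e_i+e_j$ gives $e_j(\mu_i)+2(\nabla_{e_i}S)(e_i,e_j)=0$, i.e. $e_j(\mu_i)+2(\mu_i-\mu_j)\omega_{ii}^j=0$.

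We now exploit the case $i=1$. From $e_1(\mu_1)=0$ and $\mu_1=(n-1)c-\frac{n^2H^2}{2}-\frac{n^2H^2}{4}=(n-1)c-\frac{3n^2H^2}{4}$ we get $e_1(H^2)=0$, so $H\,e_1(H)=0$. But $e_1(H)=|\grad H|\neq0$ on $U$, so $H=0$ there. This forces $\grad H=0$ on an open set, which contradicts the definition of $U$. Hence $U=\emptyset$ and $H$ is constant; connectedness of $M$, together with the analyticity from \cite{DH}, handles the passage from the open dense set to all of $M$.

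The main obstacle is justifying that $(\nabla_{e_1}S)(e_1,e_1)=e_1(\mu_1)$ exactly. This amounts to checking that the terms $2\mu_1\,g(\nabla_{e_1}e_1,e_1)$ and the corresponding Ricci correction cancel, which holds since $g(\nabla_{e_1}e_1,e_1)=0$ for a unit field. One must also verify carefully that the frame and the eigenvalue $\lambda_1=-nH/2$ are smooth on the relevant open set. If $\lambda_1$ had multiplicity greater than one, the computation would be unchanged, since it only uses $e_1$.
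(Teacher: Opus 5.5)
Your proof is correct and is essentially the paper's. Both arguments use that $e_1=\grad H/|\grad H|$ is a Ricci eigenvector with eigenvalue $(n-1)c-\frac{3n^2H^2}{4}$, so $(\nabla_{e_1}S)(e_1,e_1)=e_1\bigl(S(e_1,e_1)\bigr)$ because $g(\nabla_{e_1}e_1,e_1)=0$; the $\mathcal{A}$-type condition then gives $H\,e_1(H)=0$, hence $H\equiv 0$ and so $\grad H=0$ on an open subset of $U$, a contradiction that you spell out more explicitly than the paper does.

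The rest of your setup is not needed, because this computation holds on all of $U$ using only $e_1$. That includes the Codazzi relations (where $\omega_{j1}^1$ should read $\omega_{1j}^1$), the polarization identity, and the appeal to DeTurck--Goldschmidt analyticity, which in any case concerns harmonic curvature rather than the $\mathcal{A}$-type condition.
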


As a consequence we obtain the following:

\begin{cor}\label{cor1}
Let $M$ be a biconservative hypersurface with parallel Ricci tensor in
  a space form $\overline M^{n+1}(c)$. Then $M$  has constant mean curvature.
\end{cor}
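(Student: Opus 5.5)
The plan is to deduce Corollary~\ref{cor1} directly from Theorem~\ref{main1}, by checking that the hypothesis of parallel Ricci tensor is a special case of the hypothesis used there. No further geometric input should be needed, and in particular biconservativity is used only through Theorem~\ref{main1}.

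First, assume $M$ is a biconservative hypersurface in $\overline M^{n+1}(c)$ with $\nabla S=0$. By definition this means $(\nabla_X S)(Y,Z)=0$ for all $X,Y,Z\in TM$. Specializing to $Y=Z=X$ gives $(\nabla_X S)(X,X)=0$ for all $X\in TM$. In Gray's terminology this says that $g$ is of $\mathcal{A}$-type. One could equally cite the result of \cite{h5} recalled in the introduction: parallel Ricci tensor is equivalent to being of both $\mathcal{A}$-type and $\mathcal{B}$-type, and the $\mathcal{A}$-type half is all that is needed here.

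Second, $M$ is biconservative and satisfies $(\nabla_X S)(X,X)=0$ for every tangent vector $X$, which are exactly the hypotheses of Theorem~\ref{main1}. That theorem then yields that $M$ has constant mean curvature.

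I expect no real obstacle, since all the substantive work lies in Theorem~\ref{main1}. The only point to check is that the same smoothness and multiplicity setup for principal curvatures (on the open dense set, using analyticity via \cite{DH} where needed) applies verbatim. It does, because the hypotheses of Theorem~\ref{main1} are inherited pointwise from the parallel Ricci condition.
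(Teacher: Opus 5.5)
Your proposal is correct and matches the paper's argument: the paper obtains this corollary by noting that parallel Ricci tensor implies the $\mathcal{A}$-type condition $(\nabla_X S)(X,X)=0$, and then applying Theorem~\ref{main1}. Your closing remark about the smoothness and multiplicity setup is harmless but unnecessary, because the proof of Theorem~\ref{main1} only uses the unit field $e_1$ along $\grad H$ on the open set where $\grad H\neq 0$.
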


In addition, by taking into consideration a result of H.B. Lawson in  \cite[Proposition 2]{h7}  (cf. also \cite[Proposition 5]{ry}) we obtain the following:
\begin{cor}
Let $M$ be a biconservative hypersurface with parallel Ricci tensor in
  a space form $\overline M^{n+1}(c)$. Then the shape operator $\bold{A}$ of $M$  is covariant constant, i.e.
  $\nabla\bold{A}=0$.
\end{cor}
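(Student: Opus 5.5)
The last statement is the corollary: a biconservative hypersurface $M$ with parallel Ricci tensor in $\overline M^{n+1}(c)$ has $\nabla\bold{A}=0$. I will prove it in two steps: first reduce to constant mean curvature, then invoke Lawson's structure result.

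\emph{Step 1: constant mean curvature.} Parallel Ricci tensor, $\nabla S=0$, gives in particular $(\nabla_X S)(X,X)=0$ for all $X\in TM$. So $M$ is of Gray's $\mathcal{A}$-type, and Theorem \ref{main1} applies. This is exactly Corollary \ref{cor1}: the mean curvature $H=\frac1n\trace\bold{A}$ is constant on $M$.

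\emph{Step 2: Lawson's result.} For a hypersurface in a space form, the Gauss equation gives
\[
S=(n-1)c\,I+nH\bold{A}-\bold{A}^2 .
\]
Since $H$ is constant, differentiating yields
\[
\nabla_X S=nH\,\nabla_X\bold{A}-(\nabla_X\bold{A})\bold{A}-\bold{A}(\nabla_X\bold{A}).
\]
The hypothesis $\nabla S=0$ therefore becomes the algebraic condition
\[
nH\,\nabla_X\bold{A}=\bold{A}\,\nabla_X\bold{A}+\nabla_X\bold{A}\,\bold{A}.
\]
Lawson's \cite[Proposition 2]{h7} (cf. \cite[Proposition 5]{ry}) states that a hypersurface in a space form with parallel Ricci tensor and constant mean curvature has parallel second fundamental form. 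Applying it gives $\nabla\bold{A}=0$.

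\emph{Main obstacle.} The real work lies in Theorem \ref{main1}, whose proof I take as given. If one wished to recheck the Lawson step directly, I would proceed as follows.
\begin{enumerate}
\item Work on the open dense set where the principal curvatures $\lambda_i$ have locally constant multiplicities.
\item In a principal frame, the condition from Step 2 reads $(nH-\lambda_i-\lambda_j)\,\langle(\nabla_X\bold{A})e_i,e_j\rangle=0$.
\item The Codazzi equation makes $\langle(\nabla_{e_k}\bold{A})e_i,e_j\rangle$ totally symmetric in $i,j,k$.
\item Combining the symmetry with the vanishing coefficients shows that each component vanishes, except possibly when $\lambda_i+\lambda_j=nH$ holds for all relevant index pairs. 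Lawson's argument rules out that exceptional case.
\end{enumerate}
The local conclusion then extends to all of $M$ by density and continuity.
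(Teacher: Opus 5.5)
Your proof is correct and is exactly the paper's argument: $\nabla S=0$ implies Gray's $\mathcal{A}$-type condition $(\nabla_XS)(X,X)=0$, so Theorem~\ref{main1} (i.e.\ Corollary~\ref{cor1}) gives constant mean curvature, and Lawson's \cite[Proposition 2]{h7} then yields $\nabla\bold{A}=0$. Your optional direct check also closes. By total symmetry, any nonzero component $\langle(\nabla_{e_k}\bold{A})e_i,e_j\rangle$ must have $\lambda_i=\lambda_j=\lambda_k=\tfrac{nH}{2}$, so only terms $e_k(\lambda_i)\delta_{ij}$ could survive. These vanish by continuity, because $e_k(\lambda_i)=0$ both where $\lambda_i\neq\tfrac{nH}{2}$ and on the interior of the set where $\lambda_i$ equals the constant $\tfrac{nH}{2}$.
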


\noindent
For the case of harmonic curvature we prove the following:

\begin{thm}\label{main2}
Every biconservative  hypersurface $M$ in a space form
$\overline{M}^{n+1}(c)$ with harmonic curvature, is of constant mean
curvature.
 \end{thm}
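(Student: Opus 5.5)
We argue by contradiction: assume $H$ is not constant, and work on the open set $U=\{p\in M: \grad H(p)\neq 0\}$. By the analyticity result of DeTurck and Goldschmidt \cite{DH} (harmonic curvature means $\dive R=0$), the metric is real analytic. Hence on an open dense subset of $U$ the principal curvatures have locally constant multiplicities, and there is a smooth local orthonormal frame $\{e_1,\dots,e_n\}$ of principal directions, $\bold{A}e_i=\lambda_i e_i$. The biconservative condition for a hypersurface in $\overline M^{n+1}(c)$ reads
\[
\bold{A}(\grad H)=-\frac{nH}{2}\grad H .
\]
So we may take $e_1=\grad H/|\grad H|$ with $\lambda_1=-\frac{nH}{2}$. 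Then $e_i(H)=0$ for $i\ge 2$, and consequently $\sum_{i\ge2}\lambda_i=\frac{3nH}{2}$.

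Next we translate harmonic curvature into equations on the $\lambda_i$. By the Gauss equation,
\[
S=(n-1)c\,I+nH\bold{A}-\bold{A}^2 .
\]
The condition $dS=0$ is the Codazzi property $(\nabla_X S)Y=(\nabla_Y S)X$. Since $\bold{A}$ is itself Codazzi (the ambient space is a space form), we compute
\[
(\nabla_X S)Y-(\nabla_Y S)X = n\bigl(X(H)\bold{A}Y-Y(H)\bold{A}X\bigr)-\bigl((\nabla_X\bold{A}^2)Y-(\nabla_Y\bold{A}^2)X\bigr).
\]
Expanding $\nabla\bold{A}^2=(\nabla\bold{A})\bold{A}+\bold{A}(\nabla\bold{A})$, using the Codazzi identity for $\bold{A}$, and writing everything in the frame with the connection coefficients $\omega_{ij}^k=g(\nabla_{e_i}e_j,e_k)$ yields the following.

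(a) For $X=e_1$, $Y=e_i$ and the $e_i$-component: $e_1(\lambda_i^2)-nH\,e_1(\lambda_i)-nHe_1(H)\cdot(\ldots)$ relations. Concretely, with $\lambda_1=-\frac{nH}{2}$ we get an identity of the form
\[
e_1(\lambda_i)(2\lambda_i-nH)=\ \text{terms in } e_1(H),\ \lambda_i,\ \omega_{ii}^1(\lambda_1-\lambda_i).
\]

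(b) For $X=e_i$, $Y=e_j$ with $i\ne j$ in different eigenspaces: constraints $(\lambda_i+\lambda_j-nH)\,\omega$-type relations. These force many connection coefficients to vanish.

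We combine (a) with the standard Codazzi equations for $\bold{A}$:
\[
e_1(\lambda_i)=(\lambda_1-\lambda_i)\omega_{ii}^1,\qquad e_i(\lambda_1)=0,\qquad \omega_{11}^i=0 .
\]
This shows that $e_j(\lambda_i)=0$ for $j\ge2$, so all $\lambda_i$ depend only on the integral curves of $e_1$. The harmonic curvature identity with $X=e_1$, $Y=e_i$ then becomes
\[
(\lambda_1-\lambda_i)\,\omega_{ii}^1\,(\lambda_1+\lambda_i-nH)=e_1(\lambda_i)(2\lambda_i-nH)-nH\,e_1(\lambda_i)\ \text{adjusted}.
\]
After substituting $e_1(\lambda_i)=(\lambda_1-\lambda_i)\omega_{ii}^1$, it reduces to an algebraic relation. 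The expected conclusion is that for each $i\ge2$, either $\omega_{ii}^1=0$ (so $e_1(\lambda_i)=0$), or $\lambda_i$ satisfies a fixed linear relation with $H$ coming from $nH\,e_1(\lambda_1)$ matched against the $\bold{A}^2$ terms. Here I would just carefully redo the computation of the $e_1$-component of $(\nabla_{e_1}S)e_i-(\nabla_{e_i}S)e_1$, which equals $\omega_{ii}^1(\mu_1-\mu_i)$, where $\mu_k=(n-1)c+nH\lambda_k-\lambda_k^2$ are the Ricci eigenvalues, together with its $e_i$-component, which gives $e_1(\mu_i)=(\mu_1-\mu_i)\omega_{ii}^1$.

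The main step, and the expected obstacle, is extracting a contradiction from the resulting system on $U$. The system consists of $e_1(\lambda_i)=(\lambda_1-\lambda_i)\omega_{ii}^1$, $e_1(\mu_i)=(\mu_1-\mu_i)\omega_{ii}^1$, and $e_1(\mu_1)$ given by the trace identity $e_1(\mathrm{scal})=2\dive(S)(e_1)$. Harmonic curvature also implies constant scalar curvature, i.e.
\[
n^2H^2-|\bold{A}|^2=\text{const}.
\]
Combining the first two equations gives, whenever $\omega_{ii}^1\ne0$,
\[
(nH-2\lambda_i)(\lambda_1-\lambda_i)+nH'\lambda_i\,\frac{\lambda_1-\lambda_i}{e_1(\lambda_i)}\cdots=(\lambda_1-\lambda_i)(nH-\lambda_1-\lambda_i),
\]
which yields $\lambda_i$ as an explicit function of $H$. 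I would then split the indices $i\ge2$ into those with $\omega_{ii}^1=0$ and those with $\omega_{ii}^1\neq0$. Using $\sum_{i\ge2}\lambda_i=\frac{3nH}{2}$, constancy of $n^2H^2-\sum\lambda_i^2$, and the Gauss equation $R(e_1,e_i,e_i,e_1)=c+\lambda_1\lambda_i$ computed via the $\omega_{ii}^1$, we obtain polynomial identities in $H$ with constant coefficients. These force $H$ to be constant along $e_1$, contradicting $e_1(H)\ne0$. The delicate part is handling mixed cases, with distinct eigenvalues and multiplicities, so as to guarantee that the polynomial obtained is nontrivial.
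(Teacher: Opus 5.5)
\textbf{The gap: the final step, which is the whole content of the theorem, is never carried out.} Your set-up is essentially the paper's. Comparing the $e_i$-components of the Codazzi equations for $S$ (your $e_1(\mu_i)$ relation) and for $A$ gives exactly \eqref{e:d12}, $n\,e_1(H)\lambda_i=(\lambda_i-\lambda_1)^2\omega_{ii}^1$. Hence \eqref{e:d34} holds, and it integrates to \eqref{e:d351}, $(\lambda_1+\lambda_i)^2=d_i\lambda_i$, with free integration constants $d_i$. This is not a fixed relation with $H$; also, $\omega_{ii}^1=0$ would force $\lambda_i=0$.

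What is missing is an actual polynomial identity in $H$ with constant coefficients, and a proof that it is nontrivial. Two of the inputs you propose for this give nothing new:
\begin{itemize}
\item the contracted Bianchi identity holds on every Riemannian manifold;
\item constancy of $n^2H^2-|A|^2$ already follows from \eqref{e:d34} and \eqref{e:c11}. Sum $\lambda_ie_1(\lambda_i)=-2\lambda_ie_1(\lambda_1)+\lambda_1e_1(\lambda_i)$ over $i\ge2$ to get $\sum_{i\ge2}\lambda_ie_1(\lambda_i)=3\lambda_1e_1(\lambda_1)$, which is precisely $e_1(n^2H^2-|A|^2)=0$.
\end{itemize}
The only genuinely new input is the Riccati equation \eqref{e:c14}, and the paper's proof consists of exploiting it. Case I ($\lambda_2=\dots=\lambda_n$) uses the trace. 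Case II ($\lambda_i\ne\lambda_j$) differentiates \eqref{e:d342} twice along $e_1$, eliminates $\omega_{ii}^1$ and $\omega_{jj}^1$, and takes two resultants against \eqref{e:d351}--\eqref{e:d352}. This yields the degree-$14$ polynomial \eqref{e:d62} with leading coefficient $84934656$.

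\textbf{The nontriviality you need actually fails for $n=4$ with $\lambda_2=\lambda_3=\lambda_4$.} There the trace forces $\lambda_i=-\lambda_1$. This satisfies \eqref{e:d12}, \eqref{e:d34}, and \eqref{e:d351} with $d_i=0$, while \eqref{e:c14} reduces to a solvable second-order ODE for $\lambda_1$. Correspondingly, the paper's \eqref{e:d40} forces only $c_i=0$ when $n=4$, and its claim $\lambda_i\pm\lambda_1\neq0$ after \eqref{e:d38} is unjustified.

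This case really occurs. Rotate the catenary $x=C\cosh(z/C)$ about the $z$-axis in $\mathbb{R}^5$ with $\mathbb{S}^3$-orbits.
\begin{itemize}
\item The principal curvatures are $-C/x^2$ (profile direction) and $C/x^2$ (multiplicity $3$).
\item So $H=C/(2x^2)$ is nonconstant and $A\grad H=-2H\grad H$, i.e.\ the hypersurface is biconservative.
\item The induced metric is $ds^2+(s^2+C^2)g_{\mathbb{S}^3}$, which is conformally flat and scalar-flat, hence has harmonic curvature.
\item Directly: with $e_1=\partial_s$, the Ricci eigenvalues $\mu_1=-3C^2/x^4$ and $\mu_2=C^2/x^4$ satisfy $e_1(\mu_2)=(\mu_2-\mu_1)\omega_{22}^1$ with $\omega_{22}^1=-x'/x$.
\end{itemize}
More generally, every biconservative hypersurface of $\overline M^{5}(c)$ with $\lambda_2=\lambda_3=\lambda_4$ is conformally flat with scalar curvature $12c$, hence has harmonic curvature.

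So this case cannot be closed by any argument, and the statement needs $n\neq4$. For $n\ne4$, Case I does close: combining $(\lambda_1+\lambda_2)^2=d\lambda_2$ with $\lambda_2=-3\lambda_1/(n-1)$ gives $\lambda_1\bigl((n-4)^2\lambda_1+3(n-1)d\bigr)=0$. Case II would still require an explicit elimination of the kind the paper performs, which your sketch does not supply.
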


By taking into consideration a result of M. Umehara (\cite[Theorem 2.1]{h1}) that states that, if a hypersurface in a space form has constant mean curvature and  harmonic curvature, then it has
either one or two distinct constant principal curvatures, we obtain the following:

\begin{cor} Every biconservative  hypersurface $M$ in a space form
$\overline{M}^{n+1}(c)$ with harmonic curvature has $\nabla\bold{A}=0$.
\end{cor}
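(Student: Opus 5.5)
The plan is to combine Theorem \ref{main2} with Umehara's classification and then verify $\nabla\bold{A}=0$ directly from the Codazzi equation. Let $M$ be a biconservative hypersurface in $\overline M^{n+1}(c)$ with harmonic curvature. By Theorem \ref{main2}, the mean curvature $H$ of $M$ is constant. Since $M$ now has constant mean curvature and harmonic curvature, \cite[Theorem 2.1]{h1} applies. It shows that $M$ has either one constant principal curvature or two distinct constant principal curvatures $\lambda\neq\mu$.

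In the umbilical case $\bold{A}=\lambda I$ with $\lambda$ constant, so $\nabla\bold{A}=0$ is immediate. In the second case the eigenvalues are constant and distinct. Hence their multiplicities are constant on the connected manifold $M$, and the eigendistributions $D_\lambda, D_\mu$ are smooth and complementary. We use the Codazzi equation $(\nabla_X\bold{A})Y=(\nabla_Y\bold{A})X$. Because the eigenvalues are constant, for $Y\in D_\mu$ and any $X$ we have
$$(\nabla_X\bold{A})Y=(\mu I-\bold{A})\nabla_XY .$$
This operator kills the $D_\mu$-component of $\nabla_XY$ and multiplies the $D_\lambda$-component by $\mu-\lambda\neq 0$. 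The analogous formula holds for vectors in $D_\lambda$.

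Now take $X,Z\in D_\lambda$ and $Y\in D_\mu$, and pair the Codazzi equation with $Z$. The left side gives
$$g((\nabla_X\bold{A})Y,Z)=(\mu-\lambda)\,g(\nabla_XY,Z).$$
The right side is $g((\lambda I-\bold{A})\nabla_YX,Z)$, which vanishes because $\lambda I-\bold{A}$ maps into $D_\mu$. Hence $g(\nabla_XY,Z)=0$, and therefore $g(\nabla_XZ,Y)=-g(Z,\nabla_XY)=0$. So $\nabla_XZ\in D_\lambda$ whenever $X,Z\in D_\lambda$. By symmetry, $\nabla_YW\in D_\mu$ whenever $Y,W\in D_\mu$.

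The mixed derivatives are handled the same way. Fix $Y\in D_\mu$ and $X\in D_\lambda$. Since $(\nabla_Y\bold{A})X=(\lambda I-\bold{A})\nabla_YX$ lies in $D_\mu$, its $D_\lambda$-component is zero. Its $D_\mu$-component is, by Codazzi, the $D_\mu$-component of $(\nabla_X\bold{A})Y$, which is also zero because $(\nabla_X\bold{A})Y$ lies in $D_\lambda$. Hence $(\nabla_Y\bold{A})X=0$, that is, $\nabla_YX\in D_\lambda$, and symmetrically $\nabla_XY\in D_\mu$. Both distributions are therefore parallel. Consequently $(\nabla_V\bold{A})U=(\kappa I-\bold{A})\nabla_VU=0$ for every eigenvector $U$ with eigenvalue $\kappa$ and every vector $V$, so $\nabla\bold{A}=0$.

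The only delicate point is regularity. Theorem \ref{main2} may a priori be established only on the open dense set where the principal curvature multiplicities are locally constant, as the paper notes in the introduction. Constancy of $H$ then extends to all of $M$ by continuity, or via the analyticity result of \cite{DH}. After that, Umehara's theorem yields globally constant principal curvatures, and the Codazzi computation above is purely algebraic.
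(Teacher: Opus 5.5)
Your proposal is correct and follows the paper's route: Theorem~\ref{main2} gives constant mean curvature, and Umehara's result \cite[Theorem 2.1]{h1} then gives at most two distinct constant principal curvatures. The paper leaves the final step to $\nabla\bold{A}=0$ implicit. Your Codazzi argument supplies it correctly: $(\nabla_X\bold{A})Y\in D_\lambda$ while $(\nabla_Y\bold{A})X\in D_\mu$, so both vanish and both eigendistributions are parallel.
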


 The class of biconservative hypersurfaces in space forms  is contained in some  other classes of  hypersurfaces, such as biharmonic   hypersurfaces
or hypersurfaces satisfying  $\triangle \vec
 {H}=\lambda \vec{H}$.  Therefore,  from Theorem \ref{main2} we have following:

\smallskip
\begin{cor}
Every biharmonic  hypersurface $M$ in a space form
$\overline{M}^{n+1}(c)$ with harmonic curvature, is of constant mean
curvature. \end{cor}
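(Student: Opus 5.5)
The statement is immediate from Theorem \ref{main2}, so the plan is simply to verify that every biharmonic hypersurface satisfies that theorem's hypotheses. Biharmonicity of an isometric immersion $\phi\colon M\to\overline M^{n+1}(c)$ means that the bitension field $\tau_2(\phi)$ vanishes identically. For a hypersurface in a space form, the standard decomposition splits $\tau_2(\phi)$ into normal and tangent components:
\[
\Delta H + H\,|\bold{A}|^2 - ncH = 0, \qquad 2\bold{A}(\grad H) + nH\grad H = 0,
\]
up to the paper's sign conventions, where $\vec H = H\xi$ is the mean curvature vector and $\xi$ is the unit normal.

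Biconservativity is by definition the vanishing of the tangent component alone, i.e. the second equation. Since biharmonicity forces both components to vanish, every biharmonic hypersurface is biconservative. This is exactly the inclusion of classes mentioned just before the corollary.

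Now let $M$ be a biharmonic hypersurface with harmonic curvature. By the previous paragraph it is a biconservative hypersurface with harmonic curvature, so Theorem \ref{main2} gives that $H$ is constant.

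There is no real obstacle here. The only point needing care is to state the tangent/normal splitting of $\tau_2$ with the sign conventions used in the paper, so that "tangent part of the bitension field" matches the biconservativity condition appearing in the hypothesis of Theorem \ref{main2}.

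If one also wants to identify the resulting hypersurfaces, one can combine this with Umehara's result quoted above, which gives at most two distinct constant principal curvatures. The normal equation then reduces to $H(|\bold{A}|^2 - nc) = 0$, i.e. either $H=0$ or $|\bold{A}|^2 = nc$. This refinement is not needed for the corollary as stated.
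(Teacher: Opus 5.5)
Your proposal is correct and follows the paper's own route: a biharmonic hypersurface satisfies the tangent equation $2\bold{A}(\grad H)+nH\grad H=0$, so it is biconservative, and Theorem \ref{main2} then gives that $H$ is constant. Your inclusion, biharmonic $\subset$ biconservative, is the correct direction; the paper's sentence before the corollaries states this inclusion the other way round, but it uses it in the same sense you do.
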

 \begin{cor}
Every hypersurface $M$ in a space form
$\overline{M}^{n+1}(c)$ satisfying $\triangle \vec
 {H}=\lambda \vec{H}$ with harmonic curvature, is of constant mean
curvature.
 \end{cor}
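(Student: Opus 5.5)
The plan is to show that the hypothesis $\triangle\vec H=\lambda\vec H$ forces $M$ to be biconservative, and then to apply Theorem \ref{main2} directly.

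Let $\xi$ be a unit normal, $H$ the mean curvature function and $\vec H=H\xi$. I would use the standard decomposition of $\triangle\vec H$ for a hypersurface in $\overline M^{n+1}(c)$, obtained from the Weingarten formula $\overline\nabla_X\xi=-\bold{A}X$ and the Codazzi equation (which holds because the ambient space has constant curvature). Up to the sign convention for $\triangle$, it reads
\begin{equation*}
\triangle\vec H=\Big(2\bold{A}(\grad H)+\tfrac{n}{2}\grad H^2\Big)+\Big(\triangle H+H\trace\bold{A}^2\Big)\xi .
\end{equation*}
The term $\tfrac n2\grad H^2=nH\grad H$ comes from contracting the Codazzi equation, i.e. from $\sum_i(\nabla_{e_i}\bold{A})e_i=n\grad H$.

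Biconservativity of a hypersurface in a space form means that the tangent part of the bitension field vanishes. The bitension field of the immersion is a constant multiple of $\triangle\vec H$ plus a term proportional to $c\,n\vec H$, and the latter is purely normal. Hence biconservativity is equivalent to
\begin{equation*}
2\bold{A}(\grad H)+nH\grad H=0 ,
\end{equation*}
that is, to the vanishing of the tangential component of $\triangle\vec H$.

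Now suppose $\triangle\vec H=\lambda\vec H$. The right-hand side $\lambda H\xi$ is normal, so the tangential component of $\triangle\vec H$ is zero. Therefore $\bold{A}(\grad H)=-\tfrac n2H\grad H$, and $M$ is biconservative. The same remark covers the biharmonic case, where $\tau_2=0$ certainly has zero tangent part.

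Since $M$ is biconservative with harmonic curvature, Theorem \ref{main2} gives that $H$ is constant. The only point needing care is matching the sign and normalization conventions for $\triangle$ and the bitension field. These affect only the constants in front of each term, not the fact that $\lambda\vec H$ has no tangential part, so I expect no genuine obstacle.
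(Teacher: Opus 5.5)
Your proposal is correct and follows the paper's route: the paper simply remarks that hypersurfaces with $\triangle\vec H=\lambda\vec H$ are biconservative, and then applies Theorem \ref{main2}. Your computation, showing that the tangential part of $\triangle\vec H$ is exactly the left-hand side of (\ref{e:b9}), makes explicit the inclusion the paper only asserts in passing (and which the paper actually phrases in the reverse direction, saying biconservative hypersurfaces are ``contained in'' these classes).
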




 The paper is organized as follows.  In Section 2 we give some preliminaries.  In Section 3 we prove some properties of biconservative hypersurfaces in space forms.  In Section 4 we use the condition of harmonic curvature to prove Lemma  \ref{lemm} and Theorem \ref{main2}.
 Finally, in Section 5 we provide some counterexamples and make some remarks.

 \medskip
 \noindent
{\bf Acknowledgments.} The first author acknowledges the support of the award of a grant under FRGS for the year 2024-25, F.No. GGSIPU/DRC/2024/2437/7. The third author was partially supported by a grant from the Empirikion Foundation in Athens.
All authors would like to thank the referee for raising various questions which resulted in the
inclusion of the last section of the paper.

\section{\textbf{Preliminaries}}

  Let ($M^n, g$) be a hypersurface isometrically immersed in a $(n+1)$-dimensional space form
$(\overline{M}^{n+1}(c), \overline g)$ and $g = \overline g_{|M}$ be the induced metric.
We denote by $\xi$ a unit normal vector to $M$ where
$\overline{g}(\xi, \xi)= 1$, and by $h$ and $ \bold{A}$ the second fundamental form and the shape operator of $M$, where $h(X, Y)=g(\bold{A}(X), Y)$, for $ X, Y \in \Gamma(TM)$.

 The mean curvature $H$ of $M$ is given by
\begin{equation}\label{e:b4}
H = \frac{1}{n} \trace \bold{A},
\end{equation}
and the mean curvature vector $\vec{H}=H\xi$ is a well defined normal vector field to $M$ in $\overline{M}$.

 Let $\nabla$ denote the Levi-Civita connection of $M$. Then, the Gauss and Codazzi equations are given by
\begin{equation}\label{e:b5}
R(X, Y)Z = c(g(Y, Z) X - g(X, Z) Y)+g(\bold{A}Y, Z) \bold{A}X - g(\bold{A}X, Z) \bold{A}Y,
\end{equation}
\begin{equation}\label{e:b6}
(\nabla_{X}\bold{A})Y = (\nabla_{Y}\bold{A})X
\end{equation}
respectively, where $R$ is the curvature tensor and
\begin{equation}\label{e:b7}
(\nabla_{X}\bold{A})Y = \nabla_{X}\bold{A} Y-
\bold{A}(\nabla_{X}Y)
\end{equation}
for all $ X, Y, Z \in \Gamma(TM)$.

The Ricci curvature $S$ on $M$ is defined as
\begin{eqnarray}
S(X, Y) = \sum_{i = 1}^{n} g(R(e_i, X)Y, e_i), \label{ri1}
\end{eqnarray}
with respect to a suitable orthonormal frame field $\{e_i\}_{i=1}^{n}$,   $X, Y$ $\in$ $TM$.
 The \emph{Ricci operator} is defined by the equation
 $S(X,Y)=g(QX,Y)$.

 An immersion \(\phi : (M^n, g) \rightarrow (N^m, h)\) is called biharmonic if and only if its mean curvature vector field \(\vec{H}\) satisfies the equation (\cite{8})
\begin{equation}
\Delta \vec{H} + \operatorname{trace} R^N (d\phi, \vec{H}) d\phi = 0, \label{bis1}
\end{equation}
where \((M^n, g)\) and \((N^m, h)\) are Riemannian manifolds of dimensions \(n\) and \(m\) respectively, and \(R^N\) is the curvature tensor of \(N\).

The biharmonic
condition (\ref{bis1})  for $M^n$  in  a $(n+1)$-dimensional space form $\overline
M^{n+1}(c)$  can be decomposed into its normal and tangent part  as follows (\cite{3}, \cite{8}):
\begin{eqnarray}
2\bold{A} (\grad H)+ n  H \grad H = 0,\label{e:b9}\\
 \Delta H+H({\rm trace}(\bold{A}^2)-n c)=0.\label{bis2}
\end{eqnarray}
\begin{defn}
A hypersurface in space form  satisfying equation (\ref{e:b9})
 is called biconservative.
\end{defn}
In the present work we are concerned with biconservative
hypersurfaces $M^n$  in  a $(n+1)$-dimensional space form $\overline
M^{n+1}(c)$.

\smallskip
The following algebraic lemma will be useful to get our result:

\begin{lem} {\rm (\cite[Theorem 4.4, pp. 58--59]{ke})} Let D be a unique factorization domain, and let $f(X) =
a_{0}X^{m} +a_{1}X^{m-1} + \dots + a_{m}, g(X) = b_{0}X^{n} +
b_{1}X^{n-1} + \dots + b_{n}$ be two polynomials in $D[X]$. Assume
that the leading coefficients $a_{0}$ and $b_{0}$ of $f(X)$ and
$g(X)$ are not both zero. Then $f(X)$ and $g(X)$ have a nonconstant
common factor iff the resultant $\Re(f, g)$ of $f$ and $g$ is zero,
where
\begin{center}
$\Re(f,g)=
\begin{vmatrix}
  a_{0} & a_{1} & a_{2} & \cdots & a_{m} &   &   &   \\
    & a_{0} & a_{1} & \cdots & \cdots & a_{m} &   &   \\
    &   & \ddots & \ddots & \ddots & \ddots & \ddots &   \\
   &   &   & a_{0} & a_{1} & a_{2} & \cdots & a_{m} \\
  b_{0} & b_{1} & b_{2} & \cdots & b_{n} &   &   &   \\
    & b_{0} & b_{1} & \cdots & \cdots & b_{n} &  &   \\
    &   & \ddots & \ddots & \ddots & \ddots & \ddots &   \\
    &   &   & b_{0} & b_{1} & b_{2} & \cdots & b_{n} \\
\end{vmatrix},
$
\end{center}
and there are n rows of ``a" entries and m rows of ``b" entries.
\end{lem}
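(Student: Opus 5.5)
The plan is to pass to the fraction field $K$ of $D$ and translate the existence of a common factor into a linear-algebra statement about the Sylvester matrix. Throughout, $D[X]\subset K[X]$, and the determinant $\Re(f,g)$ may be computed over $K$. Assume without loss of generality that $a_0\neq 0$, so $\deg f = m$ (the case $b_0\neq 0$ is symmetric).

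\textbf{Step 1 (reduction to a syzygy).} I claim that $f$ and $g$ have a nonconstant common factor in $D[X]$ iff there exist $u,v\in K[X]$, not both zero, with $\deg u<n$, $\deg v<m$ and $uf+vg=0$.

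For the forward direction, suppose $f=\varphi f_1$ and $g=\varphi g_1$ with $\deg\varphi\ge 1$. Take $u=g_1$ and $v=-f_1$. Then $\deg v=m-\deg\varphi<m$, and $\deg u\le n-\deg\varphi<n$. Also $v\neq 0$ because $f\neq 0$.

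For the converse, suppose such $u,v$ exist. If $v=0$, then $uf=0$ forces $u=0$, which is excluded. So $v\neq0$, and $vg=-uf$. Hence $f$ divides $vg$ in $K[X]$. If $f$ and $g$ were coprime in $K[X]$, then $f\mid v$, contradicting $\deg v<m=\deg f$. So $f$ and $g$ have a common factor of positive degree in $K[X]$. By Gauss's lemma in the UFD $D$, such a factor can be taken primitive in $D[X]$, and it then divides both $f$ and $g$ in $D[X]$. This is the only place where the UFD hypothesis is used.

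\textbf{Step 2 (linear algebra).} Consider the $K$-linear map
\[
\Phi:K[X]_{<n}\oplus K[X]_{<m}\to K[X]_{<m+n},\qquad (u,v)\mapsto uf+vg.
\]
Both sides have dimension $m+n$. Use the basis $X^{n-1}f,\dots,f,\,X^{m-1}g,\dots,g$ of images and the monomial basis $X^{m+n-1},\dots,1$. In these bases, the rows of the matrix of $\Phi$ are exactly the rows of the displayed determinant: $n$ shifted rows of $a$'s and $m$ shifted rows of $b$'s. Hence $\det\Phi=\pm\Re(f,g)$.

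Therefore $\Phi$ has a nontrivial kernel iff $\Re(f,g)=0$. Combining this with Step 1 gives the lemma.

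\textbf{Main obstacle.} The delicate point is the degree bookkeeping when one leading coefficient vanishes. Suppose, say, $b_0=0$, so that the nominal degree $n$ exceeds $\deg g$. The matrix is still formed with the nominal $n$, and Step 1 must still hold. I handle this by keeping the nominal sizes $n,m$ throughout, and by using only $\deg f=m$ in the converse argument. This is exactly why the hypothesis that $a_0$ and $b_0$ are not both zero is needed: if both vanished, the first column of the matrix would be zero, so $\Re(f,g)=0$ automatically, while $f$ and $g$ could still be coprime.
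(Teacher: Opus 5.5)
Your proof is correct, and the degenerate case $b_0=0$ is handled properly. You reduce to a nontrivial syzygy $uf+vg=0$ with $\deg u<n$, $\deg v<m$, using only $\deg f=m$ in the converse and Gauss's lemma to descend the common factor from $K[X]$ to $D[X]$, and then identify the Sylvester matrix as the matrix of $(u,v)\mapsto uf+vg$. The paper gives no proof of its own and simply quotes the result from Kendig; your argument is the classical Sylvester-matrix proof of that cited theorem.
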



\section{\textbf{Biconservative hypersurfaces in space forms}}

In this section, we study biconservative hypersurfaces  in space forms. In view of
(\ref{e:b9}), it is obvious  that any constant mean curvature (CMC) hypersurface is
biconservative.
Therefore, we are interested in the study of
non CMC biconservative hypersurfaces in a space form $\overline
M^{n+1}(c)$, so we assume that the mean curvature is not
constant.
This implies that $\grad H\neq0$, hence there exists an
 open connected subset $U$ of $M^n$ with
${\rm grad}_{x}H \neq 0$, for all $x\in U$. From (\ref{e:b9}), it is  evident 
that $\grad H$ is an eigenvector of the shape operator
$\bold{A}$ with the corresponding principal curvature
$-\frac{nH}{2}$.

We introduce the following sets
$$ A=\{1, 2,\dots,n\},\quad  B=\{2, 3,\dots, n\}.$$

Without losing generality, we choose $e_{1}$ in the direction of
$\grad H$. Then, the shape operator $\bold{A}$ of a hypersurface
$M^n$ in a space form $\overline M^{n+1}(c)$ takes the following
form with respect to a suitable orthonormal frame $\{e_{1},
e_{2},\dots, e_{n}\}$,
\begin{equation}\label{e:c1} \bold{A}e_i=\lambda_ie_i, \quad i \in
A,
\end{equation}
where $\lambda_i$ is the eigenvalue corresponding to the eigenvector $e_i$
of the shape operator.

 Then $\grad H$ can be expressed as
\begin{equation}\label{e:c2}
\grad H =\sum_{i=1}^{n} e_{i}(H)e_{i}.
\end{equation}

 As we have taken $e_{1}$ parallel to $\grad H$, it follows that
\begin{equation}\label{e:c3}
e_{1}(H)\neq 0, \quad e_{i}(H)= 0, \quad i \in B.
\end{equation}

  We express
\begin{equation}\label{e:c4}
\nabla_{e_{i}}e_{j}=\sum_{m=1}^{n}\omega_{ij}^{m}e_{m}, \quad i, j
\in A.
\end{equation}

From (\ref{e:c4}) and using the compatibility conditions
$(\nabla_{e_{k}}g)(e_{i}, e_{i})= 0$ and $(\nabla_{e_{k}}g)(e_{i},
e_{j})= 0$, we obtain
\begin{equation}\label{e:c5}
\omega_{ki}^{i}=0, \hspace{1 cm} \omega_{ki}^{j}+ \omega_{kj}^{i}
=0,
\end{equation}
for $i \neq j$ and $i, j, k \in A$.

Taking $X=e_{i}, Y=e_{j}$ in (\ref{e:b7}) and using (\ref{e:c1}),
(\ref{e:c4}), we get

\begin{eqnarray}\label{e:A1}(\nabla_{e_{i}}\bold{A})e_{j}=e_{i}(\lambda_{j})e_{j}+\sum_{k=1}^{n}
\omega_{ij}^{k}(\lambda_{j}-\lambda_{k})e_{k}.
\end{eqnarray}

Using (\ref{e:A1}) in
(\ref{e:b6}), we find

$$e_{i}(\lambda_{j})e_{j}+\sum_{k=1}^{n}\omega_{ij}^{k}
(\lambda_{j}-\lambda_{k})e_k=e_{j}(\lambda_{i})e_{i}+\sum_{k=1}^{n}\omega_{ji}^{k}
(\lambda_{i}-\lambda_{k})e_k.$$

Taking the inner product in the above equation in a convenient way,
we
obtain

\begin{equation}\label{e:c6}
e_{i}(\lambda_{j})=
(\lambda_{i}-\lambda_{j})\omega_{ji}^{j}=(\lambda_{j}-\lambda_{i})\omega_{jj}^{i},
\end{equation}
and
\begin{equation}\label{e:c7}
(\lambda_{i}-\lambda_{j})\omega_{ki}^{j}=
(\lambda_{k}-\lambda_{j})\omega_{ik}^{j},
\end{equation}
 for $i\ne j\ne k\ne i$.

\medskip
Next, we will need the following two lemmas, whose proofs or variants can be found in several
references (e.g. \cite[Lemmas 3.1, 3.2]{fh}, \cite[Lemma 4.1]{22}).

\begin{lem}\label{l:1}
 Let $M^n$ be a biconservative hypersurface  in
  a space form $\overline M^{n+1}(c)$ having the shape operator given by
$(\ref{e:c1})$ with respect to a suitable orthonormal frame
$\{e_{1}, e_{2},\dots, e_{n}\}$. Then
\begin{equation}\label{e:k4}
\lambda_1\neq\lambda_j,\ \ \mbox{for all}
\ \ j\in B.
\end{equation}
\end{lem}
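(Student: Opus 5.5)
We argue by contradiction, working on the open connected set $U$ where $\grad H\neq 0$, with the frame of (\ref{e:c1}) in which $e_1=\grad H/|\grad H|$. The biconservative equation (\ref{e:b9}) gives $\lambda_1=-\frac{nH}{2}$. Suppose that $\lambda_j=\lambda_1$ for some $j\in B$ on an open subset of $U$. Shrinking the set if necessary, we may assume this holds on an open connected set $V\subset U$ on which the multiplicities of the principal curvatures are constant, so that the eigenvalues and the frame are smooth there.

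The key tool is the Codazzi relation (\ref{e:c6}) applied to the pair $(j,1)$. It reads
\[
e_j(\lambda_1)=(\lambda_j-\lambda_1)\,\omega_{11}^{j}.
\]
On the other hand, (\ref{e:c3}) gives $e_j(H)=0$, so
\[
e_j(\lambda_1)=-\tfrac n2\, e_j(H)=0.
\]
These two facts are consistent with each other, so this relation alone gives no contradiction. The contradiction has to come from the relation in the other direction.

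Because $\lambda_j=\lambda_1$ on $V$ and the multiplicities are constant, the $\lambda_1$-eigendistribution $D$ has dimension at least $2$. The vector field $e_j$ lies in $D$, so $e_1(\lambda_j)=e_1(\lambda_1)$.

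Now use the standard fact that for an eigendistribution of a Codazzi tensor with multiplicity at least $2$, the eigenvalue is constant along that distribution. To see this, take $X,Y\in D$ orthonormal. The Codazzi equation gives
\[
(\nabla_X\bold{A})Y=(\nabla_Y\bold{A})X.
\]
Pair this with $Y$. Since $\bold{A}-\lambda_1 I$ is symmetric and annihilates $Y$, the terms involving $\bold{A}-\lambda_1 I$ drop out after pairing with $Y$, leaving
\[
X(\lambda_1)=g\bigl((\nabla_Y\bold{A})X,Y\bigr)=g\bigl((\nabla_Y\bold{A})Y,X\bigr)=0,
\]
where the last equality holds because $(\nabla_Y\bold{A})Y\equiv Y(\lambda_1)Y \pmod{\text{range}(\bold{A}-\lambda_1 I)}$ and $X\perp Y$. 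Applying this with $X=e_1$ and $Y=e_j$ gives $e_1(\lambda_1)=0$.

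Finally, $e_1(\lambda_1)=-\frac n2 e_1(H)$, and this is nonzero by (\ref{e:c3}). This contradiction shows that no such $V$ exists, so $\lambda_1\neq\lambda_j$ for all $j\in B$ on $U$.

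The main obstacle is justifying the reduction to an open set with constant multiplicities. The claim is pointwise, and on an open dense set the multiplicities are locally constant. Suppose equality held on a set with nonempty interior; then the argument above applies on that interior. If instead equality holds only on a nowhere dense set, we do not need to treat it separately, since the frame is chosen on the dense open set, as the introduction indicates. I would state the lemma on the set where the multiplicities are locally constant, and then the argument above suffices. A cleaner pointwise version is to compute $g((\nabla_{e_j}\bold{A})e_j,e_1)$ directly using (\ref{e:A1}), which gives $e_1(\lambda_j)=(\lambda_1-\lambda_j)\omega_{jj}^1$ by (\ref{e:c6}). If $\lambda_j=\lambda_1$ on an open set, this forces $e_1(\lambda_1)=e_1(\lambda_j)=0$, which contradicts (\ref{e:c3}) in the same way.
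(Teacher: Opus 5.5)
Your proof is correct and is essentially the standard argument the paper defers to (it gives no proof of its own, only citations): pairing the Codazzi equation (\ref{e:b6}) for $X=e_1$, $Y=e_j$ with $e_j$ is exactly (\ref{e:c6}) with $i=1$, i.e.\ $e_1(\lambda_j)=(\lambda_j-\lambda_1)\omega_{jj}^1$, so $\lambda_j=\lambda_1$ forces $e_1(H)=0$; your eigendistribution lemma is a frame-free version of the same computation. You have two harmless sign slips (by (\ref{e:c6}) one has $e_j(\lambda_1)=(\lambda_1-\lambda_j)\omega_{11}^j$ and $e_1(\lambda_j)=(\lambda_j-\lambda_1)\omega_{jj}^1$), and your reduction to an open set is cleaner if you simply note that on the open dense set where multiplicities are locally constant, a coincidence $\lambda_j(p)=\lambda_1(p)$ at one point automatically persists on a neighbourhood of $p$.
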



\medskip
 In view of the Lemma \ref{l:1},
$\lambda_1= -\frac{n}{2} H$ has multiplicity one.
Since $M$ can have at most $n$ distinct principal curvatures, we  assume that these are
$\mu_1$, $\mu_{2}, \mu_{3}, \dots, \mu_{q}$, $q\leq n-1$,
 with
multiplicities $1$, $p_2, p_3, \dots, p_q$ respectively,
such that
$$\lambda_2=\lambda_3=\dots=\lambda_{p_{2}+1}\equiv\mu_2,$$
$$\lambda_{p_2+2}=\lambda_{p_2+3}=\dots=\lambda_{p_2+p_3+1}\equiv\mu_3,$$
$$\lambda_{p_2+p_3+2}=\lambda_{p_2+p_3+3}=\dots=\lambda_{p_2+p_3+p_4+1}\equiv\mu_4,$$
$$ \cdots$$
$$ \lambda_{p_2+p_3+\cdots +p_{q-1}+2}=\dots=\lambda_{p_2+p_3+\cdots +p_q+1}\equiv\mu_q.$$
Here $p_2+p_3+\dots + p_q+1=n$.

 Using (\ref{e:b4}) and (\ref{e:c1}), we obtain that
 \begin{equation}\label{e:c11}
\sum_{i=2}^n\lambda_i=p_2\mu_2+p_3\mu_3+\cdots+p_q\mu_q=\frac{3n}2H=-3\lambda_1.
\end{equation}

We introduce the sets
\begin{eqnarray*}
C_1&=&\{2, 3,\dots, p_2+1\}\\
C_2&=&\{p_2+2, p_2+3,\dots, p_2+p_3+1\}\\
C_3&=&\{p_2+p_3+2, p_2+p_3+3,\dots, p_2+p_3+p_4+1\}\\
&\vdots&\\
 C_{q-1}&=&\{p_2+p_3+\cdots +p_{q-1}+2,\dots , p_2+p_3+\cdots +p_{q}+1=n\}.
\end{eqnarray*}

\medskip
We also need the following:

\begin{lem}\label{l:2}
 Let $M^n$ be a biconservative hypersurface in
  a space form $\overline M^{n+1}(c)$ having the shape operator given by
$(\ref{e:c1})$ with respect to a suitable orthonormal frame
$\{e_{1}, e_{2},\dots, e_{n}\}$. Then the following relations are satisfied:

\begin{eqnarray} \nabla_{e_{1}}e_{1}=0, \quad \nabla_{e_{1}}e_{i}=\sum_{m\in C_{s}}\omega_{1i}^me_{m}\ \ \mbox{for all}\ \  i \in
C_{s}, \   s\in\{1,2,\dots,q-1\},\nonumber\end{eqnarray}
 \begin{eqnarray}
\nabla_{e_{i}}e_{1}= -\omega_{ii}^{1}e_{i} \ \ \mbox{for all}\ \  i \in B, \quad \nabla_{e_{i}}e_{i}=\sum_{m=1}^n\omega_{ii}^{m}e_{m}
\ \ \mbox{for all}\ \  i \in B,\nonumber \end{eqnarray}

\begin{eqnarray}\nabla_{e_{i}}e_{j}=\sum_{m\in C_s}\omega_{ij}^{m}e_{m} \ \ \mbox{for all}\ \  i, j \in C_s, \  s\in\{1,2,\dots,q-1\}, \quad i\neq j, \quad j\neq m, \nonumber \end{eqnarray}
\begin{eqnarray}
\nabla_{e_{i}}e_{j}=\omega_{ij}^i e_i+\sum_{m\in B\setminus C_s}\omega_{ij}^{m}e_{m} \ \ \mbox{for all}\ \  i \in C_s, \quad j \in
B\setminus C_s,  s\in\{1,2,\dots,q-1\},  \quad m\neq j,\nonumber \end{eqnarray}

where $\sum_{C_s}$ and $\sum_{B\setminus C_s}$ denote
the summation taken over the corresponding $C_s$ and $B\setminus C_s $, respectively  for  $s\in\{1,2,\dots,q-1\}$,  and $\omega_{ij}^{i}$ 
satisfy $(\ref{e:c5})$ and $(\ref{e:c6})$.\\
\end{lem}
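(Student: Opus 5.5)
The plan is to derive all the stated relations from two ingredients: the Codazzi relations (\ref{e:c6}), (\ref{e:c7}), and the fact that $e_1=\grad H/|\grad H|$ with $\lambda_1=-\frac{n}{2}H$ of multiplicity one (Lemma \ref{l:1}).

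\textbf{Step 1: the level hypersurfaces of $H$.} Since $e_i(H)=0$ for $i\in B$, we have $e_i(\lambda_1)=0$ for $i\in B$. Applying (\ref{e:c6}) with $j=1$ gives
\[
0=e_i(\lambda_1)=(\lambda_i-\lambda_1)\,\omega_{1i}^{1}.
\]
Lemma \ref{l:1} gives $\lambda_i\neq\lambda_1$, so $\omega_{1i}^1=0$. By (\ref{e:c5}), $\omega_{11}^i=-\omega_{1i}^1=0$, and $\omega_{11}^1=0$ holds trivially. Hence $\nabla_{e_1}e_1=0$.

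\textbf{Step 2: $\nabla_{e_i}e_1$.} For $i\neq j$ in $B$, use the bracket. Since $e_i(H)=e_j(H)=0$, we get $[e_i,e_j]H=0$. Therefore
\[
0=g([e_i,e_j],e_1)\,e_1(H)=(\omega_{ij}^1-\omega_{ji}^1)\,e_1(H),
\]
so $\omega_{ij}^1=\omega_{ji}^1$. Now apply (\ref{e:c7}) to the triple $(i,j,1)$ in the form
\[
(\lambda_i-\lambda_1)\,\omega_{ji}^{1}=(\lambda_j-\lambda_1)\,\omega_{ij}^{1}.
\]
If $\lambda_i\neq\lambda_j$, the symmetry forces $\omega_{ij}^1=0$. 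If $\lambda_i=\lambda_j$, the relation is vacuous. In that case I would instead use the $\mu_s$-eigendistribution. For $i,j\in C_s$, (\ref{e:c6}) gives $e_i(\lambda_j)=0$ for $i\neq j$ in the same block, provided that block has multiplicity at least $2$. I would then use the Codazzi identity $g((\nabla_{e_i}\bold A)e_j,e_1)=g((\nabla_{e_1}\bold A)e_i,e_j)$, which equals $e_1(\mu_s)\delta_{ij}$. This gives $(\mu_s-\lambda_1)\omega_{ij}^1=0$ for $i\neq j$, so $\omega_{ij}^1=0$. Combining with (\ref{e:c5}), $\omega_{i1}^j=-\omega_{ij}^1=0$ for $j\neq i$, and $\omega_{i1}^1=0$. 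Hence $\nabla_{e_i}e_1=\omega_{i1}^ie_i=-\omega_{ii}^1e_i$.

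\textbf{Step 3: preservation of the eigenspaces.} For $i\in C_s$ and $k\in C_t$ with $t\neq s$, apply (\ref{e:c7}) to the triple $(1,i,k)$:
\[
(\lambda_1-\lambda_k)\,\omega_{i1}^{k}=(\lambda_i-\lambda_k)\,\omega_{1i}^{k}.
\]
By Step 2 the left side vanishes. Since $\lambda_i\neq\lambda_k$, we get $\omega_{1i}^k=0$. Also $\omega_{1i}^1=0$ by Step 1, so $\nabla_{e_1}e_i\in\mathrm{span}\{e_m:m\in C_s\}$.

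\textbf{Step 4: the remaining formulas.} For $i,j\in C_s$ with $i\neq j$, the component $\omega_{ij}^1$ vanishes by Step 2. For $k\notin C_s\cup\{1\}$, apply (\ref{e:c7}) to the triple $(j,k,i)$ and use $\lambda_i=\lambda_j$:
\[
(\lambda_j-\lambda_k)\,\omega_{ij}^{k}=(\lambda_i-\lambda_k)\,\omega_{ji}^{k},
\]
so $\omega_{ij}^k=\omega_{ji}^k$. Together with the symmetric relations obtained by permuting, this forces $\omega_{ij}^k=0$, which gives $\nabla_{e_i}e_j\in\mathrm{span}\{e_m:m\in C_s\}$. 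This step is the main obstacle. I expect it to need the Codazzi identity $g((\nabla_{e_i}\bold A)e_j,e_k)=g((\nabla_{e_k}\bold A)e_i,e_j)=0$ when $e_k$ lies outside the block. For $i\in C_s$ and $j\in B\setminus C_s$, the claimed form $\nabla_{e_i}e_j=\omega_{ij}^ie_i+\sum_{m\in B\setminus C_s}\omega_{ij}^me_m$ follows from three facts. First, $\omega_{ij}^1=0$ by Step 2. Second, $\omega_{ij}^m=-\omega_{im}^j=0$ for $m\in C_s$ with $m\neq i$, by the previous case. Third, $\omega_{ij}^j=0$ by (\ref{e:c5}). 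Throughout, I would work on the open dense set where the multiplicities are locally constant, so that the frame and the eigenvalues are smooth.
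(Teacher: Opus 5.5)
Your proof is correct and is essentially the standard argument the paper defers to in its cited references (the paper gives no proof of its own). That argument runs: $\omega_{1i}^1=0$ from (\ref{e:c6}) and $e_i(\lambda_1)=0$; the symmetry $\omega_{ij}^1=\omega_{ji}^1$ from $[e_i,e_j]H=0$; and all remaining vanishing from the Codazzi relation (\ref{e:c7}) together with Lemma~\ref{l:1}. The step you flag as the main obstacle is immediate: for $i\neq j$ in $C_s$ and $k\notin C_s\cup\{1\}$, (\ref{e:c7}) reads $(\lambda_k-\lambda_j)\omega_{ik}^j=(\lambda_i-\lambda_j)\omega_{ki}^j=0$, so $\omega_{ij}^k=-\omega_{ik}^j=0$ by (\ref{e:c5}).
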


By using Lemma \ref{l:2}, (\ref{e:b5}) and (\ref{e:c1}) we evaluate $g(R(e_{1},e_{i})e_{1},e_{i})$,  and we find the
following relation:
\begin{equation}\label{e:c14}
 e_{1}(\omega_{ii}^{1})- (\omega_{ii}^{1})^{2}= c+\lambda_{1} \lambda_{i}, \quad i \in
 B.
\end{equation}
Also, using (\ref{e:c3}), Lemma \ref{l:2}, and the fact that $[e_{i}\hspace{.1 cm}e_{1}](H)=0= \nabla_{e_{i}}e_{1}(H)-\nabla_{e_{1}}e_{i}(H),$  we find that
\begin{equation}\label{e:c17}
 e_{i}e_{1}(H)= 0, \ \ \mbox{for all}\ \  i\in B.
\end{equation}

%
%
%
%
%

\section{\textbf{Biconservative hypersurfaces in space forms with harmonic curvature}}

In this section we study biconservative hypersurfaces $M^n$ in
space forms $\overline M^{n+1}(c)$ with some condition on Ricci tensor.

 Every
Riemannian manifold with parallel Ricci tensor has harmonic curvature.
As discussed in the Introduction the converse statement is not necessarily true for compact manifolds.
Then  a natural question arises whether manifolds with weaker conditions than parallel Ricci tensor could exist or not. Every parallel tensor in a Riemannian manifold is a Coddazi tensor, which occurs in a natural way in the study of a Riemannian manifold with harmonic curvature.

Therefore, in this section first we study biconservative hypersurfaces $M^n$ in
space forms $\overline M^{n+1}(c)$ with parallel Ricci tensors and later on biconservative hypersurfaces $M^n$ with harmonic curvature.

A hypersurface is said to have {\it parallel Ricci tensor} $S$ if
\begin{eqnarray}
(\nabla_{Z}S)(X, Y) = g((\nabla_{Z}Q)X, Y)=0, \label{ri2}
\end{eqnarray}
 for all $X, Y, Z$ $\in$ $TM$.

\smallskip

 We can now prove the first main result:\\

\noindent
\emph{\textbf{Proof of Theorem 1.1.}} Using (\ref{ri1}), (\ref{e:b4}) and (\ref{e:b5}), we obtain that
\begin{equation}\label{e:d1}
 S(X,Y)=(n-1) c\, g(X,Y)+nHg(\bold{A}X,Y)-g(\bold{A}X,\bold{A}Y),
\end{equation}
for all $X, Y$ $\in$ $TM$.

Now, taking $X=Y=e_1$ in (\ref{e:d1}) and using (\ref{e:c1}) and $\lambda_1=-\frac{nH}{2}$,
we find
\begin{equation}\label{e:d3}
S(e_1,e_1)=(n-1) c-\frac{3n^2 H^2}{4}.
\end{equation}

On the other hand, we have
\begin{equation}\label{e:d4}
\nabla_{e_1}S(e_1,e_1)= (\nabla_{e_1}S)(e_1,e_1)+2S(\nabla_{e_1}e_1,e_1).
\end{equation}

 Since $g(\nabla_{e_1}e_1,e_1)=0$, therefore from  (\ref{e:d1}) and (\ref{e:c1}), we have
 \begin{equation}\label{e:d5}
 S(\nabla_{e_1}e_1,e_1)=0.
\end{equation}

\smallskip
 Now, taking covariant derivative of  (\ref{e:d3}) with respect to $e_1$ and using
 (\ref{e:d4}), (\ref{e:d5})  and the condition of $\mathcal{A}$-type metric $(\nabla_XS)(X, X)=0$  we get that 
\begin{equation}\label{e:d6}
H \, e_1(H)=0,
\end{equation}
which gives a contradiction to the fact that $e_1(H)\neq 0$, hence $H$ must be constant.  
 Thus the proof is complete. \hfill $\Box$

\medskip
If we replace the condition of $\mathcal{A}$-type metric by the  stronger   condition of parallel Ricci tensor, we obtain Corollary 1.2.   
Note that since $M$ has parallel Ricci tensor,  taking the trace of (\ref{ri2}) we find that the scalar curvature is also constant.

\medskip
Next we consider  biconservative hypersurfaces with harmonic curvature.  We need the following:

  \begin{lem}\label{lemm}
Let $M$ be a biconservative hypersurface with harmonic curvature in
  a space form $\overline M^{n+1}(c)$. Then,
  \begin{eqnarray}\label{e:d12}
 n \, e_1(H) \lambda_i= (\lambda_{i}-\lambda_{1})^2 \, \omega_{ii}^{1},\qquad
 \mbox{for all}\ \  i\in B
\end{eqnarray}
\begin{eqnarray}\label{e:d15}
 e_i(\lambda_j)=0, \ \mbox{for}\  \, i\neq j  \in  B,
\end{eqnarray}
\begin{eqnarray}\label{e:d15a}
 e_i(\lambda_i)=0,
\end{eqnarray}
for $i\in B$.
  \end{lem}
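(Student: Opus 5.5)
The harmonic curvature condition $dS=0$ says that the Ricci operator $Q$ is a Codazzi tensor:
\[
(\nabla_X Q)Y=(\nabla_Y Q)X .
\]
By (\ref{e:d1}),
\[
Q=(n-1)c\,I+nH\bold{A}-\bold{A}^2 ,
\]
so $Q$ is diagonal in the frame (\ref{e:c1}) with eigenvalues
\[
\rho_i=(n-1)c+nH\lambda_i-\lambda_i^2 .
\]
Exactly as (\ref{e:c6}) was obtained from the Codazzi equation for $\bold{A}$, the Codazzi property of $Q$ gives, for $i\neq j$,
\[
e_i(\rho_j)=(\rho_i-\rho_j)\,\omega_{ji}^{j}.
\]
The plan is to compare this with (\ref{e:c6}), $e_i(\lambda_j)=(\lambda_i-\lambda_j)\omega_{ji}^j$, for suitable choices of $i,j$.

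\textbf{Step 1: relation (\ref{e:d12}).} Take $i=1$ and $j\in B$. Since $e_1(\lambda_j)$ generally is not zero, expand
\[
e_1(\rho_j)=n e_1(H)\lambda_j+(nH-2\lambda_j)\,e_1(\lambda_j),
\qquad
\rho_1-\rho_j=(\lambda_1-\lambda_j)(nH-\lambda_1-\lambda_j).
\]
Substitute $e_1(\lambda_j)=(\lambda_1-\lambda_j)\omega_{j1}^j$ from (\ref{e:c6}) and use $\omega_{j1}^j=-\omega_{jj}^1$ from (\ref{e:c5}). This gives
\[
n e_1(H)\lambda_j=(\lambda_1-\lambda_j)\,\omega_{j1}^j\bigl[(nH-\lambda_1-\lambda_j)-(nH-2\lambda_j)\bigr]
=-(\lambda_1-\lambda_j)^2\,\omega_{j1}^j ,
\]
which is $(\lambda_j-\lambda_1)^2\omega_{jj}^1$, i.e.\ (\ref{e:d12}). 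Note that $\lambda_1=-\tfrac n2H$ is used here only through the generic identity; no special value is needed.

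\textbf{Step 2: relation (\ref{e:d15}).} Take $i\neq j$ in $B$. If $\lambda_i=\lambda_j$, the same principal curvature has multiplicity $\ge2$, and (\ref{e:c6}) gives $e_i(\lambda_j)=0$ directly. If $\lambda_i\neq\lambda_j$, the same computation as in Step 1, with $e_i(H)=0$ from (\ref{e:c3}), yields
\[
0=(\lambda_i-\lambda_j)^2\,\omega_{ji}^j .
\]
Hence $\omega_{ji}^j=0$, and therefore $e_i(\lambda_j)=0$.

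\textbf{Step 3: relation (\ref{e:d15a}).} Write (\ref{e:c11}) as
\[
\sum_{j\in B}\lambda_j=\tfrac{3n}{2}H
\]
and apply $e_i$ for $i\in B$. The right-hand side vanishes by (\ref{e:c3}), and all terms with $j\ne i$ vanish by Step 2, except those $j$ with $\lambda_j=\lambda_i$. For those, $e_i(\lambda_j)=e_i(\lambda_i)$ since they are the same function. We get
\[
p\,e_i(\lambda_i)=0 ,
\]
where $p$ is the multiplicity of $\lambda_i$, and so $e_i(\lambda_i)=0$.

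\textbf{Main obstacle.} The delicate point is working on the open dense set where multiplicities are locally constant and the $\lambda_i$ are smooth, so that the Codazzi comparison is legitimate. One must also check carefully that no factor such as $\lambda_i-\lambda_j$ is divided by when it could vanish. Both issues are handled by the case split in Step 2 and by Lemma \ref{l:1}.
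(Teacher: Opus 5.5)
Your proof is correct and is essentially the paper's argument. The paper inserts $Q=(n-1)c\,I+nH\mathbf{A}-\mathbf{A}^2$ into the Codazzi condition for $Q$, simplifies with the Codazzi equation for the shape operator to get (\ref{e:d9}), and reads off components for the pairs $(e_1,e_i)$ and $(e_i,e_j)$; this is exactly your comparison of $e_i(\rho_j)=(\rho_i-\rho_j)\omega_{ji}^j$ with (\ref{e:c6}), and both finish (\ref{e:d15a}) by differentiating (\ref{e:c11}). The one difference is the case $\lambda_i=\lambda_j$ with $i\neq j\in B$: the paper appeals to an outside result there, whereas you get $e_i(\lambda_j)=0$ directly from (\ref{e:c6}), which is cleaner.
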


  \begin{proof} Suppose that $M$ has harmonic curvature. Then

  \begin{eqnarray}\label{e:d6}
(\nabla_{X}Q)Y = (\nabla_{Y}Q)X,
\end{eqnarray}
for $X, Y\in TM$ (cf. by \cite{A1}, \cite{h1}).

 From (\ref{e:d1}) we obtain that
 \begin{equation}\label{e:d7}
 QY=(n-1) c\, Y+n\,H \bold{A}Y-\bold{A}^2Y.
\end{equation}

Taking covariant derivative of (\ref{e:d7}) with respect to $X$ and using (\ref{e:d7}) and (\ref{e:b7}), we find
\begin{eqnarray}\label{e:d8}
(\nabla_{X}Q)Y = n\, H(\nabla_{X}\bold{A})Y+n \, (XH) \bold{A}Y-(\nabla_{X}\bold{A})\bold{A}Y-\bold{A}(\nabla_{X}\bold{A})Y.
\end{eqnarray}

  Using (\ref{e:d8}) and (\ref{e:b6}) in (\ref{e:d6}), we obtain
\begin{eqnarray}\label{e:d9}
 n \, (XH) \bold{A}Y-(\nabla_{X}\bold{A})\bold{A}Y=n \, (YH) \bold{A}X-(\nabla_{Y}\bold{A})\bold{A}X.
\end{eqnarray}

Putting $X=e_1$ and $Y=e_i$ in (\ref{e:d9}), and using (\ref{e:c1}) and (\ref{e:c3}), we find
\begin{eqnarray}\label{e:d10}
 n \, e_1(H) \lambda_i \, e_i=(\lambda_i- \lambda_1) \, (\nabla_{e_i}\bold{A})e_1,
\end{eqnarray}
for $i\in B$.

Taking $j=1$ in (\ref{e:A1}) and using (\ref{e:c5}) and (\ref{e:c3}), we have

\begin{eqnarray}\label{e:d11}
 (\nabla_{e_i}\bold{A})e_1= \sum_{k=1}^{n}\omega_{ik}^{1}
(\lambda_{k}-\lambda_{1})e_k.
\end{eqnarray}

Using (\ref{e:d11}) in (\ref{e:d10}) and comparing the coefficient of $e_i$ on both sides, we obtain (\ref{e:d12}).

Now, putting $X=e_i$ and $Y=e_j$, $i\neq j  \in  B$ in (\ref{e:d9}), and using (\ref{e:c1}) and (\ref{e:c3}), we find that
\begin{eqnarray}\label{e:d13}
 (\lambda_j- \lambda_i) \, (\nabla_{e_i}\bold{A})e_j=0.
\end{eqnarray}

If $\lambda_i\ne \lambda _j$
from (\ref{e:d13}), we get that 
\begin{eqnarray}\label{e:d14}
 (\nabla_{e_i}\bold{A})e_j=0,
\end{eqnarray}
due to more than two distinct principal curvatures.
Using (\ref{e:d14}) in (\ref{e:A1}) and comparing the coefficient of $e_j$, we obtain (\ref{e:d15}).

If $\lambda_i=\lambda_j$ and $i\ne j$, then by a result in \cite{cr} we obtain (\ref{e:d15}).

Finally, differentiating (\ref{e:c11}) with respect to $e_i$ for $i\in B$, and using (\ref{e:d15}) and (\ref{e:c3}), we obtain (\ref{e:d15a}). Thus the proof is complete.
\smallskip
\end{proof}

In view of (\ref{e:c3}) and (\ref{e:d15a}), we conclude that all the principal curvatures $\lambda_i$ depend on a single parameter $t$. Thus, we consider an integral curve of $e_1=\frac{d}{dt}$ passing through $x = \gamma(t_0)$ as
$\gamma(t), t \in I$. It is easy to show that there exists a local chart $(U; t = x_1, x_2,\dots,x_n)$ around $x$, such that $\lambda_i(t, x_2,\dots,x_n) = \lambda_i(t)$,  $i=1,\dots,n$ in the entire neighbourhood of $x$.
We can now  prove Theorem \ref{main2}:\\

\noindent
  \emph{\textbf{Proof of Theorem \ref{main2}.}} From (\ref{e:c6}) we get
  \begin{eqnarray}\label{e:d32}
    e_1(\lambda_i)=(\lambda _i-\lambda _1) \omega^1 _{ii},\quad i\in B.
  \end{eqnarray}

   Eliminating $\omega^1 _{ii}$ from (\ref{e:d32}) using (\ref{e:d12}), we find that
  \begin{eqnarray}\label{e:d33}
   n \, \lambda _i \,  e_1(H)=\left(\lambda _i-\lambda _1\right) e_1(\lambda_i).
  \end{eqnarray}

 Using $\lambda_1=-\frac{n\, H}{2}$ in  (\ref{e:d32}), we obtain
  \begin{eqnarray}\label{e:d34}
-2 \, \lambda _i \,  e_1(\lambda_1)=\left(\lambda _i-\lambda _1\right) e_1(\lambda_i),
  \end{eqnarray}
which can be written as
 \begin{eqnarray}\label{e:d35}
  -2 \, \lambda _i \,  \frac{d\lambda_1}{dt}=\left(\lambda _i-\lambda _1\right) \frac{d\lambda_i}{dt},
 \end{eqnarray}
  and this  gives
  \begin{eqnarray}\label{e:d36}
    \frac{d\lambda_1}{d\lambda_i}-\frac{\lambda_1}{2\, \lambda_i}= -\frac{1}{2}.
  \end{eqnarray}

  Equation  (\ref{e:d36}) is a $Leibnitz$ linear differential equation and has solution
 \begin{eqnarray}\label{e:d37}
\lambda_1+\lambda_i=c_i \, \sqrt{|{\lambda_i}|},
  \end{eqnarray}
  where $c_i$ is a constant. 

  Solving (\ref{e:d37}) for $\lambda_i$ we find
  \begin{eqnarray}\label{e:d38}
  \lambda_i= \frac{1}{2} \left(-2 \lambda_1+c_i^2\pm \sqrt{c_i^4\pm 4 \lambda_1 c_i^2} \right).
  \end{eqnarray}
\smallskip
In view of (\ref{e:d38}) and (\ref{e:d34}), we conclude that \(\lambda_i \neq 0\), \(\lambda_i \pm \lambda_1 \neq 0\), and \(\omega^1_{ii} \neq 0\). Otherwise, we arrive at a contradiction implying that \(\lambda_1\) is constant.

Now, we consider following two cases:

\smallskip
\noindent
\textbf{Case I: } If $\lambda_2=\lambda_3=\dots=\lambda_{n}$, then using this and
 (\ref{e:d38}), we get  

\begin{eqnarray}\label{e:d38c}
  \lambda_2=\lambda_3=\dots=\lambda_{n}= \frac{1}{2} \left(-2 \lambda_1+c_2^2\pm \sqrt{c_2^4\pm 4 \lambda_1 c_2^2} \right).
  \end{eqnarray}

Now, using (\ref{e:d38c}) in (\ref{e:c11}) we obtain
\begin{eqnarray}\label{e:d39}
  -3\, \lambda_1= \frac{(n-1)}{2} \left(-2 \lambda_1+c_2^2\pm \sqrt{c_2^4\pm4 \lambda_1 c_2^2} \right),
  \end{eqnarray}
 which gives
\begin{eqnarray}\label{e:d40}
 \lambda_1 \left(\lambda_1 (n-4)^2-(n-4)(n-1) c_i^2 \mp (n-1)^2 c_i^2\right)=0.
  \end{eqnarray}
  
    From (\ref{e:d40}) we get that $\lambda_1$ is a constant, a contradiction to (\ref{e:c3}). 
   
   \smallskip
\noindent
 \textbf{Case II: } If $\lambda_i\neq\lambda_j$, for some $i \neq j \in B$. Then using (\ref{e:d34}) and (\ref{e:d32}), we get
   \begin{eqnarray}\label{e:d341}
\frac{\left(\lambda _i-\lambda _1\right)^2 \omega^1_{ii}}{\lambda_i}=\frac{\left(\lambda _j-\lambda _1\right)^2 \omega^1_{jj}}{\lambda_j},
  \end{eqnarray}
  which gives
   \begin{eqnarray}\label{e:d342}
\left(\lambda _i-\lambda _1\right)^2 \omega^1_{ii} \lambda_j-\left(\lambda _j-\lambda _1\right)^2 \omega^1_{jj}\lambda_i=0.
  \end{eqnarray}
  
  Differentiating  (\ref{e:d342}) with respect to $e_1$ and using (\ref{e:d32}) and (\ref{e:c14}), we obtain
 \begin{eqnarray}\label{e:d343}
b_1  \, e_1(\lambda_1) + (c+\lambda _1 \lambda _i+(\omega^1_{ii})^2) \, b_2- \left(c+\lambda _1 \lambda _j+(\omega^1 _{jj})^2\right) \, b_3 \\ \nonumber+\left(\lambda _j-\lambda _1\right) \omega^1 _{\text{jj}} \, b_4+\left(\lambda _i-\lambda _1\right) \omega^1_{\text{ii}} \, b_5=0,
  \end{eqnarray}
  where
    \begin{align*}
   b_1 &= 2 (\lambda _1-\lambda _i) \omega^1_{ii} \lambda _j+2 \lambda _i (\lambda _j-\lambda _1) \omega^1_{jj}, \, \, \,
   b_2 = (\lambda _i-\lambda _1)^2 \lambda _j, \, \, \,
   b_3 =  \left(\lambda _j-\lambda _1\right)^2 \lambda _i, \\
   b_4 &= \left(\lambda _1-\lambda _i\right){}^2 \omega^1 _{\text{ii}}+2 \lambda _i \left(\lambda _1-\lambda _j\right) \omega^1 _{\text{jj}}, \, \, \, \,\, \,
   b_5 = 2 \left(\lambda _i-\lambda _1\right) \omega^1 _{\text{ii}} \lambda _j-\left(\lambda _1-\lambda _j\right){}^2 \omega^1_{\text{jj}}.
\end{align*}

  Using $\lambda_1=-\frac{n\, H}{2}$ in  (\ref{e:d12}), we obtain
  \begin{eqnarray}\label{e:d344}
 -2\lambda_i \, e_1(\lambda_1) = (\lambda_{i}-\lambda_{1})^2 \, \omega_{ii}^{1},\qquad
 \mbox{for some}\ \  i\in B.
\end{eqnarray}

  Eliminating $e_1(\lambda_1)$ from (\ref{e:d343}) using (\ref{e:d344}), we find
   \begin{eqnarray}\label{e:d345}
  b_6-b_7 \,\omega^1 _{\text{jj}}+6 \lambda _i^2 \left(\lambda _1-\lambda _j\right)^2 (\omega^1 _{\text{jj}})^2=0,
    \end{eqnarray}
     where
    \begin{align*}
   b_6&= 2 \Big(\lambda _1^2 \lambda _i \left(\lambda _i \left(c-2 \lambda _j^2\right)-\lambda _j \left(c+6 (\omega^1 _{\text{ii}})^2\right)+2 \lambda _i^2 \lambda _j\right)+\lambda _i^2 \lambda _j \left(c \lambda _j-\lambda _i \left(c+4 (\omega^1 _{\text{ii}})^2\right)\right)\\&+\lambda _1 \lambda _i^2 \lambda _j \left(-\lambda _i^2+9 (\omega^1 _{\text{ii}})^2+\lambda _j^2\right)+\lambda _1^3 (\omega^1 _{\text{ii}})^2 \lambda _j\Big),\\
   b_7&=2 \left(\lambda _1-\lambda _i\right) \lambda _i \omega^1 _{\text{ii}} \left(\lambda _1-\lambda _j\right){}^2.
   \end{align*}

   Eliminating $\omega^1_{jj}$ from (\ref{e:d345}) using (\ref{e:d342}), we have
   \begin{eqnarray}\label{e:d346}
 3 \left(\lambda _1-\lambda _i\right)^2 (\omega^1_{ii})^2 \lambda _j \left(\lambda _1^2-\lambda _i \lambda _j\right)-\lambda _i \left(\lambda _1-\lambda _j\right)^2 \\ \nonumber \left(\lambda _1^2 \left(c+2 \lambda _i \lambda _j\right)-c \lambda _i \lambda _j-\lambda _1 \lambda _i \lambda _j \left(\lambda _i+\lambda _j\right)\right)=0.
  \end{eqnarray}

Differentiating  (\ref{e:d346}) with respect to $e_1$ and using (\ref{e:d32}),  (\ref{e:c14}) and (\ref{e:d344}), we obtain
 \begin{eqnarray}\label{e:d347}
 \omega ^1_{\text{jj}} \left(a_4 (\omega^1 _{ii})^2+a_3\right)+a_2 (\omega^1 _{ii})^3+a_1 \omega ^1_{ii}=0,
   \end{eqnarray} 
   where
   \begin{align*}
   a_1 &= -\left(\lambda_1 - \lambda_i\right) \lambda_i 
   \Bigg(\lambda_1^4 \left(6c + 28 \lambda_i \lambda_j\right) 
   - \lambda_1^3 \big(\lambda_i \left(4c + 35 \lambda_j^2\right) 
   - 2c \lambda_j + 29 \lambda_i^2 \lambda_j\big) + \lambda_1^2 \lambda_j \big(-12 \lambda_i (c - 2 \lambda_j^2) 
   \\
   &\quad + 4c \lambda_j + 19 \lambda_i^2 \lambda_j + 3 \lambda_i^3\big)+ \lambda_1 \lambda_i \lambda_j \big(\lambda_i \left(2c - 15 \lambda_j^2\right) 
   - 4c \lambda_j + 8 \lambda_i^2 \lambda_j - 5 \lambda_j^3\big) \\
   &\quad + \lambda_i \lambda_j^2 \big(\lambda_i \left(10c + \lambda_j^2\right) 
   - 4c \lambda_j + \lambda_i^2 \lambda_j\big) \Bigg), \\
   a_2 &= 6 \left(\lambda_1 - \lambda_i\right)^2 \lambda_j 
   \big(\lambda_1 \lambda_i \left(\lambda_i - 2 \lambda_j\right) 
   + 6 \lambda_i^2 \lambda_j - 7 \lambda_1^2 \lambda_i + 2 \lambda_1^3\big), \\
   a_3 &= 2 \lambda_i^2 \left(\lambda_1 - \lambda_j\right)^2 
   \Big(\lambda_1^2 \left(2c + 8 \lambda_i \lambda_j + \lambda_i^2\right) 
   + \lambda_1 \lambda_i \left(c - 3 \lambda_i \lambda_j - 4 \lambda_j^2\right) - 3c \lambda_i \lambda_j - 2 \lambda_1^3 \lambda_i\Big), \\
   a_4 &= 6 \left(\lambda_1 - \lambda_i\right)^2 \lambda_i \left(\lambda_1 - \lambda_j\right) 
   \left(\lambda_1^2 - 2 \lambda_i \lambda_j\right).
\end{align*}

Eliminating $\omega^1_{jj}$ from (\ref{e:d347}) using (\ref{e:d342}), we have
 \begin{eqnarray}\label{e:d348}
a_5- (\omega ^1_{ii})^2\,a_6 =0,
   \end{eqnarray} 
   where
  \begin{align*}
 a_5 &= \lambda_i \left(\lambda_1 - \lambda_j\right) \Bigg(
        \lambda_1^4 \left(6c + 32 \lambda_i \lambda_j\right) 
        - \lambda_1^3 \Big(\lambda_i \left(4c + 51 \lambda_j^2\right) 
        + 2c \lambda_j + 35 \lambda_i^2 \lambda_j \Big)+ \lambda_1^2 \lambda_j \Big(\lambda_i \left(32 \lambda_j^2 - 10c\right) \\
        &\quad 
        + 4c \lambda_j + 41 \lambda_i^2 \lambda_j + 5 \lambda_i^3 \Big) + \lambda_1 \lambda_i \lambda_j \Big(\lambda_i \left(4c - 23 \lambda_j^2\right) 
        + 2c \lambda_j + 2 \lambda_i^2 \lambda_j - 5 \lambda_j^3 \Big) \\
        &\quad + \lambda_i \lambda_j^2 \Big(\lambda_i \left(4c + \lambda_j^2\right) 
        - 4c \lambda_j + \lambda_i^2 \lambda_j \Big) \Bigg),\\
   a_6&= 6 \left(\lambda _1-\lambda _i\right) \lambda _j \Big(-\lambda _1^3 \left(9 \lambda _i+2 \lambda _j\right)+\lambda _1^2 \lambda _i \left(2 \lambda _i+3 \lambda _j\right) +\lambda _1 \lambda _i \lambda _j \left(9 \lambda _i+2 \lambda _j\right)\\&-2 \lambda _i^2 \lambda _j \left(\lambda _i+3 \lambda _j\right)+3 \lambda _1^4\Big).
\end{align*}

Eliminating $\omega^1_{ii}$ from (\ref{e:d348}) using (\ref{e:d346}), we find
 \begin{eqnarray}\label{e:d349}
 \lambda _i \left(\lambda _1-\lambda _j\right)\Big(  -8 c \lambda _1^4 \lambda _i+a_{10} \,\lambda _j+a_{9} \, \lambda _j^2+a_8 \,\lambda _j^3+a_7 \,\lambda _j^4\Big)=0,
   \end{eqnarray}
where
 \begin{align*}
 a_7 &= \lambda _i^3+6 \lambda _1 \lambda _i^2+\lambda _1^2 \lambda _i,\\
   a_8&= 8 c \lambda _i^2+\lambda _i^4-8 \lambda _1 \lambda _i^3-3 \lambda _1^2 \lambda _i^2-14 \lambda _1^3 \lambda _i,\\
   a_9&= -32 c \lambda _1 \lambda _i^2+8 c \lambda _i^3+25 \lambda _1^4 \lambda _i-4 \lambda _1^3 \lambda _i^2-3 \lambda _1^2 \lambda _i^3+6 \lambda _1 \lambda _i^4,\\
   a_{10}&=32 c \lambda _1^3 \lambda _i-8 c \lambda _1^4-20 \lambda _1^5 \lambda _i+25 \lambda _1^4 \lambda _i^2-14 \lambda _1^3 \lambda _i^3+\lambda _1^2 \lambda _i^4.
\end{align*}

From (\ref{e:d349}), we get
\begin{eqnarray}\label{e:d350}
 -8 c \lambda _1^4 \lambda _i+a_{10} \,\lambda _j+a_{9} \, \lambda _j^2+a_8 \,\lambda _j^3+a_7 \,\lambda _j^4=0.
\end{eqnarray}

Also, from (\ref{e:d37}), we have
\begin{eqnarray}\label{e:d351}
 \lambda _i d_i-(\lambda _i+\lambda _1)^2=0,
\end{eqnarray}
and
\begin{eqnarray}\label{e:d352}
 \lambda _j  d_j-\left(\lambda _j+\lambda _1\right){}^2=0,
\end{eqnarray}
where $d_i=\pm c_i^2$ and $d_j=\pm c_j^2.$

We write
equations (\ref{e:d350}) and (\ref{e:d352}) as polynomials
${\mathcal{F}_1}_{(\lambda_1,\lambda_i)}(\lambda_j),
{\mathcal{G}_1}_{(\lambda_1,\lambda_i)}(\lambda_j)$ with
coefficients in polynomial ring $\mathbb{R}[\lambda_1,\lambda_i]$ over real
field. According to Lemma 2.1, the equations
${\mathcal{F}_1}_{(\lambda_1,\lambda_i)}(\lambda_j)=0,
{\mathcal{G}_1}_{(\lambda_1,\lambda_i)}(\lambda_j)=0$ have a
common root if and only if
\begin{equation}\label{e:n31}\begin{array}{lcl}
\mathfrak{R}({\mathcal{F}_{1}}_{(\lambda_1,\lambda_i))}(\lambda_j),
{\mathcal{G}_1}_{(\lambda_1,\lambda_i)}(\lambda_j))=0,
\end{array}\end{equation}
which is a polynomial equation
\begin{equation}\label{e:d61}\begin{array}{rcl}
{\mathcal{F}_{2}}(\lambda_i, \lambda_1)=0,
\end{array}
 \end{equation}
where
${\mathcal{F}_{2}}(\lambda_i, \lambda_1)= 64 c^2 \lambda _1^{10}+a_{11} \lambda _i^8+a_{12} \lambda _i^7+a_{13} \lambda _i^6+a_{14} \lambda _i^5+a_{15} \lambda _i^4+a_{16} \lambda _i^3+a_{17} \lambda _i^2+a_{18} \lambda _i$ and
\begin{align*}
a_{11}&=16 \lambda _1^6+8 d_j \lambda _1^5+d_j^2 \lambda _1^4,\\
a_{12}&=160 \lambda _1^7-64 d_j \lambda _1^6-22 d_j^2 \lambda _1^5+64 c \lambda _1^5+d_j^3 \lambda _1^4+16 c d_j \lambda _1^4,\\
a_{13}&=240 \lambda _1^8-552 d_j \lambda _1^7+215 d_j^2 \lambda _1^6-8 d_j^3 \lambda _1^5-256 c d_j \lambda _1^5+64 c^2 \lambda _1^4+16 c d_j^2 \lambda _1^4,\\
a_{14}&=-320 \lambda _1^9+128 d_j \lambda _1^8+140 d_j^2 \lambda _1^7-2240 c \lambda _1^7-58 d_j^3 \lambda _1^6+1424 c d_j \lambda _1^6-640 c^2 \lambda _1^5\\&-64 c d_j^2 \lambda _1^5-8 c d_j^3 \lambda _1^4+64 c^2 d_j \lambda _1^4,\\
a_{15}&=2800 \lambda _1^{10}-648 d_j \lambda _1^9-417 d_j^2 \lambda _1^8+1024 c \lambda _1^8+116 d_j^3 \lambda _1^7+896 c d_j \lambda _1^7+960 c^2 \lambda _1^6-960 c d_j^2 \lambda _1^6\\&+160 c d_j^3 \lambda _1^5+256 c^2 d_j \lambda _1^5-8 c d_j^4 \lambda _1^4-64 c^2 d_j^2 \lambda _1^4,\\
a_{16}&=-2400 \lambda _1^{11}+320 d_j \lambda _1^{10}+426 d_j^2 \lambda _1^9-2880 c \lambda _1^9-95 d_j^3 \lambda _1^8+4400 c d_j \lambda _1^8+3328 c^2 \lambda _1^7-2400 c d_j^2 \lambda _1^7\\&+592 c d_j^3 \lambda _1^6-3712 c^2 d_j \lambda _1^6-48 c d_j^4 \lambda _1^5+896 c^2 d_j^2 \lambda _1^5-64 c^2 d_j^3 \lambda _1^4,\\
a_{17}&=3600 \lambda _1^{12}-2264 d_j \lambda _1^{11}+425 d_j^2 \lambda _1^{10}-5120 c \lambda _1^{10}-20 d_j^3 \lambda _1^9+4480 c d_j \lambda _1^9+960 c^2 \lambda _1^8\\&-1360 c d_j^2 \lambda _1^8+160 c d_j^3 \lambda _1^7+256 c^2 d_j \lambda _1^7-8 c d_j^4 \lambda _1^6-64 c^2 d_j^2 \lambda _1^6,\\
a_{18}&=960 c \lambda _1^{11}-720 c d_j \lambda _1^{10}-640 c^2 \lambda _1^9+160 c d_j^2 \lambda _1^9-8 c d_j^3 \lambda _1^8+64 c^2 d_j \lambda _1^8.
\end{align*}

Rewrite,
equations (\ref{e:d61}) and (\ref{e:d351}) respectively as polynomials
${\mathcal{F}_2}_{(\lambda_1)}(\lambda_i),
{\mathcal{G}_2}_{(\lambda_1)}(\lambda_i)$ with
coefficients in polynomial ring $\mathbb{R}[\lambda_1]$ over real
field. According to Lemma 2.1, the equations
${\mathcal{F}_2}_{(\lambda_1)}(\lambda_i)=0,
{\mathcal{G}_2}_{(\lambda_1)}(\lambda_i)=0$ have a
common root if and only if
\begin{equation}\label{e:n31}\begin{array}{lcl}
\mathfrak{R}({\mathcal{F}_{2}}_{(\lambda_1))}(\lambda_i),
{\mathcal{G}_1}_{(\lambda_1)}(\lambda_i))=0,
\end{array}\end{equation}
which results in the polynomial equation
\begin{equation}\label{e:d62}\begin{array}{rcl}
{\mathcal{F}_{3}}(\lambda_1)=\sum_{k=0}^{14} l_k \lambda_1^k = 0, 
\end{array}
 \end{equation}
where $ {\mathcal{F}_{3}}(\lambda_1)$ is a polynomial of degree $14$ in $\lambda_1$ with real constant coefficients $l_k$. The leading coefficient is $l_{14} = 84934656$, while $l_{13} = -70778880 d_i - 70778880 d_j$, and so on. Since $\lambda_1$ must satisfy this polynomial equation, it follows that $\lambda_1$ is a real constant, leading to a contradiction.

 Thus the proof is complete.
  \hfill$\Box$

\section{\textbf{Concluding Remarks}}

\noindent
{\bf 1)}
The converses of Corollary \ref{cor1} and Theorem \ref{main2} are not true in general.

Indeed, consider the classical $3$-dimensional  catenoid in $M^3\subset \mathbb R^4$
parametrized by
\[
X(s,\theta,\phi) = \Big(r(s)\sin\theta\cos\phi,\, r(s)\sin\theta\sin\phi,\, r(s)\cos\theta,\, w(s)\Big),\]
where
\[ 0<\theta<\pi, \qquad 0<\phi<2\pi,\]
generated by rotating the profile curve
\[ \gamma(s)=(r(s),w(s)) \]
in the $(r,x_4)$-plane satisfying 
$r'(s)^2+w'(s)^2=1$.
The induced metric is
\[
g = ds^2 + r(s)^2 g_{\mathbb{S}^2}.
\]
The principal curvatures are
\[
\kappa_0 = -\frac{r''}{\sqrt{1-r'^2}}, \qquad \kappa_1 = \kappa_2 = \frac{\sqrt{1-r'^2}}{r}.
\]
The minimal condition \(H = 0\) yields the ODE
\[
r'' = \frac{2(1-r'^2)}{r}.
\]
The first integral is
\[
r'^2 = 1 - \frac{a^4}{r^4}, \quad a > 0,
\]
and
\[
r'' = \frac{2a^4}{r^5}.
\]

In the orthonormal frame \(e_0 = \partial_s\), \(e_1 = r^{-1}\partial_\theta\), \(e_2 = (r\sin\theta)^{-1}\partial_\phi\), the Ricci tensor $S$ is diagonal with eigenvalues
\[
\lambda_0 = -\frac{4a^4}{r^6}, \qquad \lambda_1 = \lambda_2 = -\frac{a^4}{r^6}.
\]

The relevant connection coefficients are
\[
\nabla_{e_0} e_1 = 0, \quad \nabla_{e_1} e_0 = \frac{r'}{r} e_1, \quad \nabla_{e_1} e_1 = -\frac{r'}{r} e_0.
\]

Direct computation gives
\[
(\nabla_{e_0} S)(e_1,e_1) = \lambda_1' = \frac{6a^4 r'}{r^7},
\]
\[
(\nabla_{e_1} S)(e_0,e_1) = \frac{r'}{r}(\lambda_0 - \lambda_1) = -\frac{3a^4 r'}{r^7}.
\]

Thus,
\[
(\nabla_{e_0} S)(e_1,e_1) - (\nabla_{e_1} S)(e_0,e_1) = \frac{9a^4 r'}{r^7} \neq 0
\]
(since \(r' \not\equiv 0\) away from the neck). Therefore, $S$ is \textbf{not a Codazzi tensor}, so the hypersurface does \textbf{not} have harmonic curvature.

\medskip
Furthermore, \(\lambda_0' = \frac{24a^4 r'}{r^7} \neq 0\), so \(\nabla S \neq 0\) (the Ricci tensor is not parallel).

This minimal (\(H=0\)) hypersurface $M^3$ is therefore a CMC counterexample showing that neither parallel Ricci tensor nor harmonic curvature follows from the CMC condition alone.

We also remark that equations (4.12) and (4.13) show that constant mean curvature alone does not imply that Ricci tensor is parallel or the condition of harmonic curvature. The $3$-dimensional catenoid in $\mathbb{R}^{4}$ is a counterexample, as we just saw. However, if one additionally assumes that the shape operator is parallel 
($\nabla \bold{A}=0$), equivalently that the second fundamental form is parallel, then $\nabla S=0$. Hence the condition that the shape operator is parallel is a natural sufficient condition under which the converse statement holds.

\medskip
\noindent
{\bf 2)} 
Our Theorem \ref{main2}  shows that every biconservative hypersurface in a space form with harmonic curvature, is necessarily of constant mean curvature.
Therefore, we can directly apply the classification theorem of  \'{O}machi (\cite{h4})  to obtain the following:

\begin{cor}[Compact Case]\label{cor:compact}
Let \(M^{n-1}\) be a compact biconservative hypersurface with harmonic curvature immersed in a space form.
\begin{enumerate}
    \item If the ambient space is \(\mathbb{R}^n\), then \(M^{n-1}\) is a standard hypersphere \(\mathbb{S}^{n-1}\).
    \item If the ambient space is \(\mathbb{S}^n\), then \(M^{n-1}\) is a Clifford torus \(\mathbb{S}^p(r) \times \mathbb{S}^{n-1-p}(s)\) for some \(0 \leq p \leq n-1\).
\end{enumerate}
\end{cor}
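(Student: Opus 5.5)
This corollary follows at once from Theorem \ref{main2} combined with Ômachi's classification in \cite{h4}, so the plan is simply to verify that Ômachi's hypotheses are met. Let $M^{n-1}$ be compact, biconservative, with harmonic curvature, immersed in $\mathbb R^n$ or $\mathbb S^n$. Then $M$ is a biconservative hypersurface with harmonic curvature in a space form of dimension one higher, and we can invoke Theorem \ref{main2}. It gives constant mean curvature, first on the open dense set where the principal curvature multiplicities are locally constant. By continuity of $H$ this extends to all of $M$. The regularity issues behind that open dense set are handled through the real analyticity result of DeTurck and Goldschmidt \cite{DH} cited in the Introduction.

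Once $H$ is known to be constant, $M$ is a compact hypersurface with harmonic curvature and constant mean curvature in a Euclidean space or a sphere. This is exactly the setting of Ômachi's theorem \cite{h4}, and his classification gives the two alternatives.
\begin{itemize}
\item In $\mathbb R^n$, the only compact hypersurfaces of this kind are round hyperspheres.
\item In $\mathbb S^n$, they are the products $\mathbb S^p(r)\times\mathbb S^{n-1-p}(s)$ with $r^2+s^2=1$. The degenerate values $p=0$ and $p=n-1$ cover the small and great hyperspheres.
\end{itemize}
As a consistency check, Umehara's theorem \cite[Theorem 2.1]{h1} also says that $M$ has at most two distinct constant principal curvatures, hence $\nabla\bold{A}=0$. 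Compactness then singles out the listed isoparametric examples.

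The only real obstacle is bookkeeping against Ômachi's exact hypotheses: dimension conventions (here the hypersurface is $M^{n-1}$), orientability, and whether his list in the sphere is stated up to congruence with the totally umbilical cases included. I would check the statement in \cite{h4} and adjust the range of $p$ and the radii accordingly. No new computation is needed.
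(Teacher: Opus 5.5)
Your proposal is correct and follows the paper's own route: Theorem~\ref{main2} gives constant mean curvature, and \^{O}machi's classification \cite{h4} of compact hypersurfaces with harmonic curvature and constant mean curvature in $\mathbb{R}^n$ or $\mathbb{S}^n$ then gives the two listed cases. One small tightening: the passage from the open dense set to all of $M$ should not rest on continuity of $H$, since a continuous function that is locally constant on an open dense set need not be constant; instead note that the open set $\{\grad H\neq 0\}$, if nonempty, would meet the dense set, where the local argument gives a contradiction, so $\grad H\equiv 0$ on (connected) $M$.
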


\medskip
\noindent
{\bf 3)}
The local theory developed by Umehara (\cite[Theorem 2.1, p. 83]{h1})  shows that a CMC hypersurface with harmonic curvature in a space form is locally isoparametric and possesses at most two distinct principal curvatures.
Combining this with our theorem, every biconservative hypersurface with harmonic curvature is locally one of the classical isoparametric models.

Hence any noncompact example is locally an open subset of a standard isoparametric hypersurface (such as a generalized cylinder in Euclidean space or a Clifford hypersurface in the sphere).
We emphasize that this is a local geometric classification consequence of Umehara's theorem.

\medskip
\noindent
The authors have no competing interests to declare that are relevant to the content of this article.



\bigskip
\noindent
Author's address:\\
\textbf{Ram Shankar Gupta}\\
University School of Basic and Applied
Sciences, Guru Gobind Singh Indraprastha University, Sector-16C,
Dwarka, New Delhi-110078,
India\\
\textbf{Email:} ramshankar.gupta@gmail.com

\smallskip
\noindent
\textbf{Andreas Arvanitoyeorgos}\\
University of Patras, Department of Mathematics, GR-26504 Rion, Greece, and\\
Hellenic Open University, Aristotelous 18, GR-26335 Patras, Greece\\
\textbf{Email:} arvanito@math.upatras.gr

\smallskip
\noindent
\textbf{Marina Statha}\\
University of Patras, Department of Mathematics, GR-26504 Rion, Greece\\
\textbf{Email:} marina.statha@gmail.com
 \end{document}